\newtheorem{theorem}{Theorem}
\newtheorem{lemma}{Lemma}
\newtheorem{corollary}{Corollary}
\newtheorem{proposition}{Proposition}
\newtheorem{remark}{Remark}
\newcounter{rea}
    \newcommand{\K}{\mathbf K}
\newcommand{\E}{\mathbf E}
\newcommand{\epsn}{\varepsilon_n}
\newcounter{rek}
\begin{document}
%\title{Uniform Estimates of the Prolate Spheroidal Wave Functions and Approximations in Sobolev Spaces.}

%\author{Aline Bonami$^a$ and Abderrazek Karoui$^b$}
%$^a$ F\'ed\'eration Denis-Poisson, MAPMO-UMR 7349, Universit\'e d'Orl\'eans,   45067 Orl\'eans cedex 2, France\\
%$^b$ University of Carthage, Department of Mathematics, Faculty of Sciences of Bizerte, 7021, Tunisia.

%
\begin{center}
{\large {\bf Spectral Decay of Time and Frequency Limiting Operator.}}\\
\vskip 1cm Aline Bonami $^a$ and Abderrazek Karoui $^b$ {\footnote{Corresponding author,\\
This work was supported in part by the  French-Tunisian  CMCU 10G 1503 project and the
Tunisian DGRST  research grants 05UR 15-02 and UR 13ES47. Part of this work was done while the first author was visiting the Faculty of Sciences of Bizerte, University of Carthage.
The two authors have also benefited from the program ``Research in pairs " of the CIRM, Luminy, France. }}
\end{center}
\vskip 0.5cm {\small
$^a$ F\'ed\'eration Denis-Poisson, MAPMO-UMR 7349,  Department of Mathematics, University of Orl\'eans, 45067 Orl\'eans Cedex 2, France.\\
\noindent $^b$ University of Carthage,
Department of Mathematics, Faculty of Sciences of Bizerte, Tunisia.}\\
Emails: aline.bonami@univ-orleans.fr ( A. Bonami), abderrazek.karoui@fsb.rnu.tn (A. Karoui)\\

\noindent{\bf Abstract}--- For fixed $c,$ the  Prolate Spheroidal Wave
Functions (PSWFs) $\psi_{n, c}$ form a basis with remarkable properties
for the space of band-limited functions with bandwidth $c$. They have
been largely studied and used after the seminal work of D. Slepian, H. Landau and H. Pollack.
Many of the PSWFs applications rely heavily of the behavior and the decay rate of 
the eigenvalues $(\lambda_n(c))_{n\geq 0}$ of the  time and frequency limiting operator, which  we denote by $\mathcal Q_c.$
 Hence, the issue of the accurate
estimation of the spectrum of this operator has attracted a considerable interest, both in numerical and
theoretical studies. In this work, we give an explicit integral approximation formula  for these eigenvalues. This approximation holds
true starting from the plunge region where  the spectrum of $\mathcal Q_c$ starts to have a fast decay. 
As a consequence of our explicit approximation formula, we give a precise description of the super-exponential decay rate 
of the $\lambda_n(c).$ Also, we  mention that the described approximation scheme provides us with fairly  accurate 
approximations of the $\lambda_n(c)$ with  low computational load, even for very large values of the parameters $c$ and $n.$  
Finally, we provide the reader with some numerical examples  that
illustrate the different results of this work.\\

\noindent {2010 Mathematics Subject Classification.} Primary
42C10, 65L70. Secondary 41A60, 65L15.\\
\noindent {\it  Key words and phrases.} Prolate spheroidal wave
functions, eigenvalues and eigenfunctions approximations, asymptotic estimates.\\

\section{Introduction}

For a given value $c>0$, called the bandwidth, PSWFs $(\psi_{n,c}(\cdot))_{n\geq 0}$ constitute an orthonormal basis of $L^2([-1, +1]),$ an orthogonal system of $L^2({\bf R})$ and an
orthogonal  basis of  the Paley-Wiener space $B_c,$ given by $B_c=\left\{ f\in L^2({\bf
R}),\,\, \mbox{Support\ \ } \widehat f\subset [-c,c]\right\}.$
Here, $\widehat f$ denotes the Fourier transform of $f$. They are eigenfunctions of the
  compact integral operators
 $\mathcal F_c$ and $\mathcal Q_c= \frac c{2\pi}\mathcal F_c^*\mathcal F_c$, defined  on $L^2([-1,1])$ by
\begin{equation}\label{eq1.1}
 \mathcal F_c(f)(x)= \int_{-1}^1
e^{i\, c\, x\, y} f(y)\, dy,\quad \mathcal Q_c(f)(x)=\int_{-1}^1\frac{\sin c(x-y)}{\pi (x-y)}\, f(y)\,
dy. \end{equation}
Since   the
operator $\mathcal F_c$ commutes with the Sturm-Liouville operator $\mathcal L_c$,
\begin{equation}\label{eq1.0}
\mathcal L_c(\psi)=-\frac{d}{d\, x}\left[(1-x^2)\frac{d\psi}{d\,x}\right]+c^2 x^2\psi,
\end{equation}
 PSWFs $(\psi_{n,c}(\cdot))_{n\geq 0}$ are also eigenfunctions of $\mathcal L_c$. They are ordered in such a way that the corresponding eigenvalues of $\mathcal L_c$, called $\chi_n(c)$, are strictly increasing. Functions $\psi_{n,c}$ are restrictions to the interval $[-1, +1]$ of real analytic functions on the whole real line and eigenvalues $\chi_n(c)$ are values of  $\lambda$ such that the equation $\mathcal L_c \psi=\lambda \psi$ has a non zero bounded solution on the whole interval.

PSWFs have been introduced  by D. Slepian, H. Landau  and  H. Pollak
\cite{LW, Slepian1, Slepian3, Slepian2} in relation with signal processing. 
 For a detailed review on   properties,  numerical computations, asymptotic results and
first applications of the PSWFs, the reader is referred to  recent books on the subject, \cite{Logan}, \cite{Osipov3}.

By Plancherel identity, PSWFs are normalized so that 
\begin{equation}\label{eqq1.4}
\int_{-1}^1 |\psi_{n,c}(x)|^2\, dx = 1,\quad \int_{\mathbb R}
|\psi_{n,c}(x)|^2\, dx =\frac{1}{\lambda_n(c)},\quad n\geq 0.
\end{equation}
Here, $(\lambda_n(c))_n$ is the infinite sequence of  the
eigenvalues of $\mathcal Q_c,$ also arranged in the decreasing order $1>
\lambda_0(c)> \lambda_1(c)>\cdots>\lambda_n(c)>\cdots.$ We call $\mu_n(c)$ the eigenvalues of $\mathcal F_c$. They are given by
$$\mu_{n}(c)=i^n\sqrt{\frac {2\pi}c \lambda_n(c)}.$$
Also, we adopt the  sign normalization of the PSWFs,  
\begin{equation}\label{eeqq1.4}
\psi_{n,c}(0) > 0\mbox{\ \  for even\ \ } n,\quad \, \psi'_{n,c}(0) > 0,\mbox{\ \ for odd \ \ }  n.
\end{equation}

\medskip

One of the main issues that we discuss here is the decay rate of the eigenvalues $\lambda_n(c)$. This decay rate plays a crucial role in most of the 
various concrete applications of the PSWFs.   In this direction, one knows their asymptotic behaviour for $c$ fixed, which has been  given in 1964 by Widom, see \cite{Widom}.
\begin{equation}\label{Widom_asymptotic}
 \lambda_n(c)\sim  \left ( \frac {e c} {4(n+\frac 12)} \right)^{2n+1}=\lambda_n^W(c).
 \end{equation}
This gives the exact decay for $n$ large enough, but one would like to have a more precise information in terms of uniformity of this behaviour, both in $n$ and $c$. On the other hand, Landau has considered  the value of the smallest integer $n$ such that $\lambda_n(c)\leq 1/2$ in \cite{Landau}. More precisely, if we note $c^*_n$ the unique value of $c$ such that $\lambda_n(c)=1/2$, then he proves that
 \begin{equation}
 \label{validityC0intro}
 \frac{\pi}{2}(n-1)\leq c^*_n\leq \frac{\pi}{2} (n+1)\quad\quad \lambda_n(c^*_n)=\frac{1}{2}.
 \end{equation}
So, for $c$ fixed, we almost know when $\lambda_n(c)$ passes through the value $1/2$.  Landau and Widom have also described the asymptotic behaviour, when $c$ tends to $\infty$, of the distribution of the eigenvalues $\lambda_n(c)$.

The search for more precise estimates of the $\lambda_n(c)$ has  attracted a considerable interest, both in numerical and theoretical studies.  We try here to give approximate values for $\lambda_n(c)$ for $c\leq c^*_n$, with some uniformity in the quality of approximation. We rely on the exact formula for the eigenvalues
$\lambda_n(c),$ given by integrating the following differential equation,
see \cite{Fuchs},
\begin{equation}\label{eq1lambda_n}
\partial_{\tau} \ln \lambda_n(\tau)=\frac{2|\psi_{n,\tau}(1)|^2}{\tau}.
\end{equation}
over the  interval $(c,c^*_n).$ This is different from the classical way to estimate the $\lambda_n(c),$ via
equation \eqref{eq1lambda_n}, where the integration is rather done on the interval $(0,c),$ see for example \cite{Rokhlin, Slepian3, Xiao}. 
Consequently, we are mainly interested in the behavior, as well as in  accurate and fast schemes of approximation, 
of the $\lambda_n(c)$, given by the  formula 
\begin{equation}
 \label{eqq2lambda_nintro}
 \lambda_n(c)=\frac{1}{2} \exp\left(-2\int_c^{c^*_n} \frac{(\psi_{n,\tau}(1))^2}{\tau} \, d\tau \right).
  \end{equation}

We use our recent works \cite{Bonami-Karoui1, Bonami-Karoui2}  to estimate the value $\psi_{n,\tau}(1)$. In the first paper it is proved that $|\psi_{n,\tau}(1)|\leq 2\chi_n(\tau)^{1/4}$, which is not sufficient to find a sharp estimate for all values $c$.  The approximation given in the second paper leads to a second estimate of $\psi_{n,\tau}(1),$  valid for $\frac{\pi n}{2}-c$ larger than some multiple of $\ln n$. 
Based on this second estimate, we  define $\widetilde { \lambda_n}(c)$ as
\begin{equation}\label{decay22}
\widetilde { \lambda_n}(c) =  \frac{1}{2} \exp\left(-\frac{\pi^2(n+\frac 12)}{2} \int_{\Phi\left(\frac{2c}{\pi (n+\frac 12)}\right)}^1 \frac{1}{t(\mathbf E(t))^2}\, dt \right).\end{equation}
Here $\mathbf E$ is the    elliptic  integral of the second kind given by \eqref{elliptic_integrals} and the function $\Phi$ is the inverse of the function $t\mapsto \frac{t}{\E(t)}$.
 We  prove that $\widetilde { \lambda_n}(c)$ is comparable with $ \lambda_n(c)$ up to some power of $n$. This  is stated  in the following theorem, which is the main result of this paper.
\begin{theorem}\label{Required00}
There exist three constants $\delta_1\geq 1, \delta_2, \delta_3, \geq 0$ such that, for $n\geq 3$ and $c\leq \frac{\pi n}2$,
\begin{equation}\label{required00}
\delta_1 ^{-1}n^{-\delta_2}\left(\frac c{c+1}\right)^{\delta_3}\leq \frac{\widetilde { \lambda_n}(c)}{ \lambda_n(c)}\leq \delta_1 n^{\delta_2}\left(\frac c{c+1}\right)^{-\delta_3}.
\end{equation}
 \end{theorem}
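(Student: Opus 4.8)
The plan is to pass to logarithms and prove the equivalent additive statement. Writing
\begin{align*}
\Lambda_n(c)&:=-\ln\big(2\lambda_n(c)\big)=2\int_c^{c^*_n}\frac{\psi_{n,\tau}(1)^2}{\tau}\,d\tau,\\
\widetilde\Lambda_n(c)&:=-\ln\big(2\widetilde{\lambda_n}(c)\big)=\frac{\pi^2(n+\frac12)}{2}\int_{\Phi\left(\frac{2c}{\pi(n+\frac12)}\right)}^1\frac{dt}{t\,\E(t)^2},
\end{align*}
(the first by \eqref{eqq2lambda_nintro}, the second by \eqref{decay22}), inequality \eqref{required00} is exactly
\[
\big|\Lambda_n(c)-\widetilde\Lambda_n(c)\big|\ \le\ \ln\delta_1+\delta_2\ln n+\delta_3\Big|\ln\tfrac{c}{c+1}\Big|,\qquad n\ge 3,\ \ 0<c\le\tfrac{\pi n}{2}.
\]
The heart of the argument is to recast $\Lambda_n(c)$ in the variable used for $\widetilde\Lambda_n(c)$ and then to feed in the approximation of $\psi_{n,\tau}(1)$ coming from \cite{Bonami-Karoui2}.

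First I would perform the change of variables $\tau=g_n(t):=\frac{\pi(n+\frac12)}{2}\,\frac{t}{\E(t)}$ in the integral defining $\Lambda_n(c)$; equivalently $t=\Phi\big(2\tau/(\pi(n+\frac12))\big)$. Since $g_n$ is an increasing bijection of $(0,1]$ onto $(0,\frac{\pi(n+\frac12)}{2}]$ and $\frac{d}{dt}\,\frac{t}{\E(t)}=\frac{\K(t)}{\E(t)^2}$, one gets
\[
\Lambda_n(c)=2\int_{t_0}^{t^*}\frac{\psi_{n,g_n(t)}(1)^2\,\K(t)}{t\,\E(t)}\,dt,\qquad t_0:=\Phi\Big(\tfrac{2c}{\pi(n+\frac12)}\Big),
\]
where $t^*$ is the image of $\tau=c^*_n$. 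By Landau's bounds \eqref{validityC0intro}, $c^*_n=\frac{\pi}{2}n+O(1)$, so $t^*$ differs from $1$ by $O(1/n)$; this is why \eqref{decay22} integrates all the way up to $1$, and the possible $O(1)$-long piece of the $\tau$-integral above $\frac{\pi(n+\frac12)}{2}$ is harmless by the crude bound used below. That $t$ is the right variable is the content of the Bohr--Sommerfeld relation $\int_{-1}^1\sqrt{\frac{\chi_n(\tau)-\tau^2x^2}{1-x^2}}\,dx=2\sqrt{\chi_n(\tau)}\,\E\!\big(\tau/\sqrt{\chi_n(\tau)}\big)\approx\pi(n+\tfrac12)$, which identifies $t$ with $\tau/\sqrt{\chi_n(\tau)}$ and, together with \cite{Bonami-Karoui2}, turns the estimate there into one of the form $\psi_{n,g_n(t)}(1)^2=\frac{\pi^2(n+\frac12)}{4\,\K(t)\,\E(t)}\big(1+\rho_n(t)\big)$ with a remainder $\rho_n$ small in the bulk.

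Next I would split the $t$-integral at $t_1:=g_n^{-1}\!\big(\frac{\pi n}{2}-A\ln n\big)$, with $A$ the constant governing the validity range of \cite{Bonami-Karoui2}; one checks $\ln\frac1{1-t_1}=O(\ln n)$. On the outer piece $[t_1,1]$ --- equivalently the $\tau$-interval $[\frac{\pi n}{2}-A\ln n,\,c^*_n]$, of length $O(\ln n)$ by \eqref{validityC0intro} --- I use only the crude bound $\psi_{n,\tau}(1)^2\le 4\sqrt{\chi_n(\tau)}=O(n)$ from \cite{Bonami-Karoui1}, together with $\chi_n(\tau)\le n(n+1)+\tau^2=O(n^2)$ and $\tau\gtrsim n$; this makes the outer contribution to $\Lambda_n(c)$ equal to $O(\ln n)$, and the same bounds give $\frac{\pi^2(n+\frac12)}{2}\int_{t_1}^1\frac{dt}{t\,\E(t)^2}=O(\ln n)$ since $t\,\E(t)^2\to1$ near $t=1$. (When $c$ already lies in the plunge region, $t_0\ge t_1$, the bulk is empty, and both $\Lambda_n(c)$ and $\widetilde\Lambda_n(c)$ are $O(\ln n)$ by these same estimates.) On the inner piece $[t_0,t_1]$ I insert the approximation above: its main term reproduces exactly $\frac{\pi^2(n+\frac12)}{2}\int_{t_0}^{t_1}\frac{dt}{t\,\E(t)^2}$, i.e.\ the inner part of $\widetilde\Lambda_n(c)$, so that
\[
\Lambda_n(c)-\widetilde\Lambda_n(c)=-\,\frac{\pi^2(n+\frac12)}{2}\int_{t_0}^{t_1}\frac{\rho_n(t)}{t\,\E(t)^2}\,dt\ +\ O(\ln n).
\]

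It remains to control the accumulated remainder $R_n(c):=\frac{\pi^2(n+\frac12)}{2}\int_{t_0}^{t_1}\frac{\rho_n(t)}{t\,\E(t)^2}\,dt$, which is the main obstacle: the large factor $n$ has to be compensated by the smallness of $\rho_n(t)$ \emph{uniformly} up to the two endpoints $t=t_0$ (i.e.\ $\tau=c$) and $t=t_1$ (the plunge). Using the remainder bound from \cite{Bonami-Karoui2} --- of size $\lesssim \frac1{n(1-t)}$ in the range considered --- and the elementary fact $\E(t)\ge1$ on $[0,1]$, one is reduced to $|R_n(c)|\lesssim\int_{t_0}^{t_1}\frac{dt}{t(1-t)}\le |\ln t_0|+\ln\frac1{1-t_1}=|\ln t_0|+O(\ln n)$. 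Finally, since $\E(t)\in[1,\frac\pi2]$ gives $u\le \Phi(u)\le\frac\pi2 u$, one has $|\ln t_0|\le \ln\frac{\pi(n+\frac12)}{2c}$, which is $\le \delta_2'\ln n$ when $c\ge1$ and $\le \delta_2'\ln n+|\ln c|\le\delta_2'\ln n+\big|\ln\frac c{c+1}\big|$ when $c<1$; this is exactly where the factor $(c/(c+1))^{\delta_3}$ in \eqref{required00} originates. Collecting the estimates yields the inequality with suitable $\delta_1\ge1$, $\delta_2,\delta_3\ge0$. The most delicate point is this uniform bound on $R_n(c)$: it requires the remainder in the approximation of $\psi_{n,\tau}(1)^2$ to be summable against the weight $\frac{dt}{t\,\E(t)^2}$ after multiplication by $n$, a careful matching of the bulk and plunge regimes, and --- via \eqref{validityC0intro} --- the reconciliation of the upper endpoint $c^*_n$ of \eqref{eqq2lambda_nintro} with the endpoint $1$ of \eqref{decay22}.
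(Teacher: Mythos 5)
Your proposal is correct and follows essentially the same route as the paper: split the integral $\int_c^{c_n^*}$ at a point $\frac{\pi n}{2}-O(\ln n)$ where the approximation of $\psi_{n,\tau}(1)^2$ from \cite{Bonami-Karoui2} is valid, bound the tail by the crude estimate $|\psi_{n,\tau}(1)|^2\le 4\sqrt{\chi_n}$ together with Landau's localization of $c_n^*$, and convert the bulk into the elliptic-integral expression via the substitution $t=\Phi\bigl(2\tau/(\pi(n+\tfrac12))\bigr)$, with the $\ln^+(1/c)$ term arising from $\int \frac{dt}{t(1-t)}$ exactly as in the paper's bound on its error $\mathcal E_2$. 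The only organizational difference is that you absorb the discrepancy between $\sqrt{q(\tau)}$ and $\Phi\bigl(2\tau/(\pi(n+\tfrac12))\bigr)$ into a single multiplicative remainder $\rho_n(t)$ (which requires a derivative bound on $s\mapsto 1/(s\K(s))$ near $s=1$ that you assert rather than verify), whereas the paper keeps this as a separate error $\mathcal E_3$ and controls it by sandwiching $\sqrt q$ between $\Phi\bigl(\frac{2\tau}{\pi(n+1)}\bigr)$ and $\Phi\bigl(\frac{2\tau}{\pi n}\bigr)$ and comparing the monotone quantities $\mathcal J_n$, $\mathcal J_{n+1/2}$, $\mathcal J_{n+1}$.
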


Let us explain, roughly speaking, why Legendre  elliptic  integrals are involved here. The Sturm-Liouville equation $\mathcal L_c \psi=\chi_n(c) \psi$ can be rewritten as $$
\psi''-\frac{2x}{1-x^2}\psi'+\chi_n(c)\frac{1-q  x^2}{1-x^2}\psi=0,
$$
with $q=c^2/\chi_n(c)$. Under the assumptions on $n$ and $c$, the value $q$ may be seen as a parameter. We assume that  $q<1$ and  proceed to a WKB approximation of the solution $\psi_{n, c}$, but not directly: the equation is transformed into its normal form $U''+(\chi_n + \theta)U=0$ through the Liouville transformation given by the change of functions $\psi=[(1-x^2)(1-qx^2)]^{-1/4}U$ and the change of variable $x\mapsto\int_0^x\sqrt{\frac{1-qt^2}{1-t^2}} dt$. We recognize in this change of variable   the incomplete Legendre elliptic  integral of the second kind, while the $L^2-$norm of the factor $[(1-x^2)(1-qx^2)]^{-1/4}$ can be written in terms of the complete Legendre elliptic  integral of the first kind. This has been exploited by many authors (for instance \cite{Dunster}, \cite{Xiao}, \cite{Osipov}). We refer to our recent work \cite{Bonami-Karoui2} for the approximation of $|\psi_{n, c}(1)|^2$ in terms of Legendre elliptic  integrals, which is central here and leads to 
the formula \eqref{decay22}.

Let us come back to the present paper. When $n$ tends to $\infty$ with $c$ fixed, we recover the asymptotic behavior given by Widom and, as a corollary, we have  the following, which may be seen as a kind of quantitative Widom's Theorem.
\begin{corollary} \label{cor-intro}
Let $m>0$ be a positive real number and let $M>m,$  $\varepsilon>0$ be  given.  Then there exists a constant $A(\varepsilon, m, M)$ such that, for all $m\leq c\leq M\sqrt n$ and all $n$, we have the inequality
\begin{equation}
 \lambda_n(c)\leq A(\varepsilon, m, M) e^{\varepsilon n}\left( \frac {e c} {4(n+\frac 12)} \right)^{2n+1}.
\end{equation}
\end{corollary}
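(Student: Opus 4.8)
\textbf{Proof strategy for Corollary \ref{cor-intro}.}

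The plan is to deduce the corollary from Theorem \ref{Required00} by analyzing the asymptotic behavior of $\widetilde{\lambda_n}(c)$ as $n\to\infty$ in the regime $m\le c\le M\sqrt n$, and comparing it with Widom's formula $\lambda_n^W(c)=\left(\frac{ec}{4(n+\frac12)}\right)^{2n+1}$. First I would observe that in this regime the argument $u:=\frac{2c}{\pi(n+\frac12)}$ of the function $\Phi$ tends to $0$ (it is $O(1/\sqrt n)$), so we are looking at the behavior of the integral defining $\widetilde{\lambda_n}(c)$ near the endpoint where $\Phi(u)\to 0$. Since $t/\E(t)\to 0$ as $t\to 0$ and $\E(0)=\pi/2$, we have $\Phi(u)\sim \frac{\pi}{2}u$ for small $u$, i.e. $\Phi\left(\frac{2c}{\pi(n+\frac12)}\right)\sim \frac{c}{n+\frac12}$.

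Next I would extract the leading asymptotics of the exponent
$$
I_n(c):=\frac{\pi^2(n+\tfrac12)}{2}\int_{\Phi(u)}^1 \frac{dt}{t(\E(t))^2}.
$$
Splitting the integral at a fixed small threshold and using $\E(t)=\frac{\pi}{2}(1+O(t^2))$ near $0$, the dominant contribution is
$$
\frac{\pi^2(n+\tfrac12)}{2}\cdot\frac{4}{\pi^2}\int_{\Phi(u)}^{\cdot}\frac{dt}{t}
= 2(n+\tfrac12)\bigl(-\ln\Phi(u)+O(1)\bigr),
$$
so that, using $\Phi(u)\sim\frac{c}{n+\frac12}$,
$$
I_n(c) = 2(n+\tfrac12)\ln\frac{n+\frac12}{c} + O(n)\cdot o(1) + O(n).
$$
The key point is that the error terms here are $o(n)$ relative to the main term only when one is careful: the $O(1)$ from $\int\frac{dt}{t(\E(t))^2}-\frac{4}{\pi^2}\int\frac{dt}{t}$ is a genuine constant (depending on the fixed threshold), contributing $O(n)$ to $I_n(c)$, which translates into a multiplicative constant — not an $e^{\varepsilon n}$ factor — \emph{unless} that constant matches exactly the constant $1-\ln 4$ hidden in Widom's formula. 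Indeed $\lambda_n^W(c)=\exp\bigl(-(2n+1)\ln\frac{4(n+\frac12)}{ec}\bigr)=\exp\bigl(-2(n+\frac12)\ln\frac{n+\frac12}{c}-2(n+\frac12)(\ln 4-1)\bigr)$, so I would need to verify that $\frac{\pi^2}{2}\int_0^1\bigl(\frac{1}{t(\E(t))^2}-\frac{4}{\pi^2 t}\mathbf 1_{t<1/2}\bigr)\,dt$ combined with the boundary terms reproduces precisely $2(\ln 4-1)$. Rather than pin down this constant exactly, the cleaner route — and the one I expect the authors take — is: Theorem \ref{Required00} gives $\lambda_n(c)\le \delta_1 n^{\delta_2}(\frac{c}{c+1})^{-\delta_3}\widetilde{\lambda_n}(c)$, and separately one shows $\widetilde{\lambda_n}(c)\le C(m,M)\,\lambda_n^W(c)$ in the stated regime by a direct (constant-level, not exponential-level) comparison of exponents, valid because $c$ ranges over a compact-in-ratio set. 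Then the polynomial prefactor $n^{\delta_2}$ and the factor $(\frac{c}{c+1})^{-\delta_3}\le (1+1/m)^{\delta_3}$ are both absorbed into $e^{\varepsilon n}$ for $n$ large, and into $A(\varepsilon,m,M)$ for the finitely many remaining $n$; the constant $m$ enters to keep $c$ bounded below (so $(\frac{c}{c+1})^{-\delta_3}$ is bounded) and $M$ to keep $u\to 0$ uniformly.

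The main obstacle is the uniform control of $I_n(c)$ against $2(n+\frac12)\ln\frac{4(n+\frac12)}{ec}$ with only an $o(n)$ (indeed $O(1)$ after dividing by $n$, then swallowed by $\varepsilon n$) discrepancy, \emph{uniformly} over $m\le c\le M\sqrt n$. This requires showing that $\frac{1}{n}\bigl|I_n(c)-2(n+\frac12)\ln\frac{4(n+\frac12)}{ec}\bigr|\to 0$ uniformly, which in turn needs: (i) the expansion $\E(t)=\frac\pi2+O(t^2)$ to be used with explicit remainder so the near-$0$ integral is controlled; (ii) the asymptotics $\Phi(u)=\frac\pi2 u(1+O(u^2))$, hence $\ln\Phi(u)=\ln\frac{c}{n+\frac12}+O(1/n)$, with the $O(1/n)$ error multiplied by $n$ still being $O(1)=o(n)/\varepsilon$; and (iii) verifying the exact value of the convergent constant $\frac{\pi^2}{2}\int_0^1\frac{dt}{t}\bigl(\frac{1}{(\E(t))^2}-\frac{4}{\pi^2}\bigr)$ plus the tail $\int_{1/2}^1$ piece and the $t\to0$ boundary term equals $2(\ln4-1)$ — or, bypassing this, simply noting that whatever finite constant it equals contributes only an $n$-linear term that is dominated by $\varepsilon n$. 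Once $\widetilde{\lambda_n}(c)\le A_1(\varepsilon,m,M)e^{\varepsilon n/2}\lambda_n^W(c)$ is in hand, combining with the upper bound in \eqref{required00} and absorbing $\delta_1 n^{\delta_2}(1+1/m)^{\delta_3}\le A_2(\varepsilon,m,M)e^{\varepsilon n/2}$ finishes the proof.
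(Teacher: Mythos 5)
Your overall route is the paper's own: combine the upper bound of Theorem \ref{Required00} with an asymptotic evaluation of $\widetilde{\lambda_n}(c)$ against Widom's expression, then absorb the polynomial factor $\delta_1 n^{\delta_2}$, the factor $\left(\frac{c}{c+1}\right)^{-\delta_3}\leq\left(\frac{m}{m+1}\right)^{-\delta_3}$ (this is where $c\geq m$ enters), and the remainder coming from the $O(x^2)$ error in the exponent (which under $c\leq M\sqrt n$ is a multiplicative factor at most $e^{\pi^2M^2/4}$, which is where $M$ enters) into $A(\varepsilon,m,M)e^{\varepsilon n}$. This is exactly what the paper does via formula \eqref{at-infty} and Corollary \ref{Tilde}, which give $\widetilde{\lambda_n}(c)\leq\frac12\left(\frac{ec}{4(n+\frac12)}\right)^{2n+1}e^{\pi^2c^2/(4(n+\frac12))}$, followed by the corollary containing \eqref{best}.

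The one genuine flaw is the proposed ``bypass'' in your item (iii): it is not true that whatever finite constant the integral equals ``contributes only an $n$-linear term that is dominated by $\varepsilon n$.'' If $\Delta=\int_0^1\frac{\pi^2/4-\mathbf E(t)^2}{t\,\mathbf E(t)^2}\,dt$ differed from $\ln(4/e)$ by some $\alpha\neq0$, the ratio $\widetilde{\lambda_n}(c)/\lambda_n^W(c)$ would carry a factor $e^{-(2n+1)\alpha}$, and for $\alpha<0$ this destroys the conclusion for every $\varepsilon<2|\alpha|$ --- the whole content of the corollary is that $\varepsilon>0$ is arbitrary, so a fixed nonzero linear coefficient in the exponent cannot be swallowed. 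You had in fact identified this correctly in your second paragraph (``\ldots unless that constant matches exactly\ldots''), so the bypass contradicts your own analysis; the same objection applies to the claim that a ``constant-level comparison of exponents'' suffices, since that comparison itself hinges on knowing the constant. The identification $\Delta=\ln(4/e)$ is therefore unavoidable. It is worth noting that the paper does not compute this integral either: it asserts \eqref{Delta} and justifies it a posteriori by consistency with Widom's theorem (the two-sided bound \eqref{best} with a different constant would contradict \eqref{Widom_asymptotic}). A self-contained proof of the corollary would require either evaluating $\Delta$ or at least proving $\Delta\geq\ln(4/e)$, which is the direction needed for the upper bound.
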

 We can give an explicit constant $A(\varepsilon, m, M)$. Also, we will show
 that asymptotically,  $\widehat \lambda_n(c)=2\widetilde \lambda_n(c)$ is equivalent   to Widom's asymptotic formula \eqref{Widom_asymptotic}.
 \smallskip

The fact that we recover Widom's asymptotic behavior is already a good test of validity but we can go further numerically. In fact  we recover the exact equivalent given by Widom up to the factor $1/2$, which justifies the approximation of  $\lambda_n(c)$ by $\widehat{\lambda_n}(c)= 2\widetilde{\lambda_n}(c)$ instead of $\widetilde{\lambda_n}(c)$, at least for large values of $n$.

\begin{remark}
 Numerical experiments  show that the approximation of $\lambda_n(c)$ by $\widehat{\lambda_n}(c)= 2\widetilde{\lambda_n}(c)$ is  surprisingly accurate.  It is striking that the values of ${\displaystyle |\widehat{\mu_n}(c)|=\sqrt{\frac{2\pi}{c}\widehat{\lambda_n}(c) }}$ coincide with   the values of  $|\mu_n(c)|$ that have been computed in\cite{Osipov-Rokhlin}, with  relative errors that are less than $3\%$. Numerical tests indicate that this relative error bound holds true as soon as $|\mu_n(c)|\leq 0.15.$ Moreover, the smaller the value of $|\mu_n(c)|,$ the smaller is the corresponding relative error. For example, for $c=10 \pi$ and $n=90,$ we have found that 
 $|\mu_n(c)|\approx 8.64288E-57$ and ${\displaystyle \frac{|\widehat{\mu_n}(c)-\mu_n(c)|}{|\mu_n(c)|}\approx 7.71E-05.}$
 \end{remark}

We try to explain the factor $2$ in the expression of  $\widehat{\lambda_n}(c),$ which is of course very small compared to the accumulated errors in the theoretical approach. 
Let us mention that another method to approximate the values $\lambda_n(c)$ has been used by Osipov in \cite{Osipov2}. The estimates given in his paper are of different nature and do not propose such a simple and accurate formula. In addition, he mainly considers values of $n$ such that $\frac{\pi n}{2}-c$ is smaller than some multiple of $\ln c$. At this moment both works may be seen complementary. But we underline the fact that numerical tests validate the accuracy of the approximant \eqref{decay22}   even when $c$ is close to the critical value, while our theoretical approach is not  sufficient to do it.
\medskip

This work is organized as follows. In Section 2,  we list some  estimates of the PSWFs and their associated eigenvalues $\chi_n(c)$.   In Section 3, we prove a sharp exponential decay rate
of the eigenvalues $\lambda_n(c)$ associated with the integral operator $\mathcal Q_c.$
 In Section 4,  we provide the reader
with  some numerical examples that illustrate the different
results	 of this work.

 We will systematically   skip the parameter  $c$ in $\chi_n(c)$ and $\psi_{n,c}$, when there is no doubt on the value of the bandwidth.
 We then note  $q= c^2/\chi_n$ and skip both parameters $n$ and $c$ when their values are obvious from the context. 

\section{Estimates of PSWFs and eigenvalues $\chi_n(c).$}
 Here we first list   some classical as well as some recent results on PSWFs and their eigenvalues $\chi_n$,
 then we push forward the methods and adapt them to our study. We systematically use the same notations as in \cite{Bonami-Karoui2}.
It is well known that  the eigenvalues $\chi_n$ satisfy the classical inequalities
\begin{equation}
\label{bounds1-chi}
n (n+1)   \leq \chi_n \leq n(n+1)+c^2.
\end{equation}
Next, we recall the Legendre elliptic  integral of the first and second kind, that are  given respectively, by
\begin{equation}
\label{elliptic_integrals}
\mathbf K(k)=\int_0^1 \frac{ dt}{\sqrt{(1-t^2)(1-k^2 t^2)}},\quad \quad
 \mathbf E(k)=\int_0^1 \sqrt{\frac{1- k^2 t^2}{1-t^2}}\, dt, \quad  0\leq k\leq 1.
\end{equation}
 Osipov has proved in \cite{Osipov} that the condition $q=\frac{c^2}{\chi_n}<1$
 is fulfilled when $c<\frac{\pi n}2$, while it is not when $c> \frac{\pi (n+1)}2.$ This is part of the following statement, which gives precise lower and upper bounds of the
 quantity ${\displaystyle \sqrt{q}= \frac{c}{\sqrt{\chi_n}}}$, see
 \cite{Bonami-Karoui2}.
\begin{lemma} \label{chi-between2}
For all $c>0$ and $n\geq 2$ we have
\begin{equation}\label{ineqPhi}
\Phi \left(  \frac {2c}{\pi(n+1)}\right) < \frac c{\sqrt{\chi_n}} < \Phi \left(\frac {2c}{\pi n}\right),
\end{equation}
where $\Phi$ is the  inverse of the function $k\mapsto \frac k{\mathbf E(k)}=\Psi(k),\,\, 0\leq k\leq 1.$
\end{lemma}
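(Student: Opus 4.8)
The plan is to turn the two inequalities \eqref{ineqPhi} into a single two‑sided quantization bound for $\sqrt{\chi_n}\,\mathbf E(\sqrt q)$ (with $q=c^2/\chi_n$), and then to establish the latter by Sturm comparison on the Liouville normal form of $\mathcal L_c\psi_{n,c}=\chi_n\psi_{n,c}$. Throughout I abbreviate $\mathbf E=\mathbf E(\sqrt q)$; by \cite{Osipov} one has $q<1$ as soon as $c<\frac{\pi n}{2}$, which is the range in which the right member of \eqref{ineqPhi} is defined, so I assume $q<1$. The reduction is immediate once one notes that $\Psi(k)=k/\mathbf E(k)$ satisfies $\Psi(0)=0$, $\Psi(1)=1$ and $\Psi'(k)=\mathbf K(k)/\mathbf E(k)^{2}>0$ on $(0,1)$, hence is an increasing bijection of $[0,1]$ whose inverse is $\Phi$: applying $\Psi$ to the three members of \eqref{ineqPhi} (each of which lies in $[0,1]$) and using $\Psi(c/\sqrt{\chi_n})=\sqrt q/\mathbf E=c/(\sqrt{\chi_n}\,\mathbf E)$, inequality \eqref{ineqPhi} becomes exactly
\[
\frac{\pi n}{2}<\sqrt{\chi_n}\,\mathbf E<\frac{\pi(n+1)}{2}.
\]
The only additional external fact I use is classical: $\psi_{n,c}$ is real analytic on $[-1,1]$, does not vanish at $\pm 1$, and has exactly $n$ simple zeros in $(-1,1)$.

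Next I pass to the normal form described just before the statement. As $q<1$ the weight $1-qx^{2}$ is positive on $[-1,1]$; with $U=[(1-x^{2})(1-qx^{2})]^{1/4}\psi_{n,c}$ and $x\mapsto\xi=\int_{0}^{x}\sqrt{\frac{1-qt^{2}}{1-t^{2}}}\,dt$, an increasing bijection of $[-1,1]$ onto $[-\mathbf E,\mathbf E]$, the equation becomes
\[
\ddot U+\big(\chi_n+\theta(\xi)\big)U=0,\qquad \xi\in(-\mathbf E,\mathbf E),
\]
with $\theta$ an explicit rational function of $x$. From that formula I would record the two facts needed below: $\theta>0$ throughout $(-\mathbf E,\mathbf E)$, and $\theta(\xi)=\frac14(\mathbf E\mp\xi)^{-2}+O(1)$ as $\xi\to\pm\mathbf E$. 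Consequently $U$ extends continuously to the closed interval with $U(\pm\mathbf E)=0$, while $U$ and $\dot U$ behave like $(\mathbf E\mp\xi)^{1/2}$ and $(\mathbf E\mp\xi)^{-1/2}$ there; and the interior zeros of $U$ coincide with the zeros of $\psi_{n,c}$ in $(-1,1)$, so $U$ has precisely $n$ zeros in $(-\mathbf E,\mathbf E)$.

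For the right‑hand inequality, suppose for contradiction that $\chi_n\ge\Lambda_{n+1}:=\big(\frac{\pi(n+1)}{2\mathbf E}\big)^{2}$, the $(n+1)$‑st Dirichlet eigenvalue of $-d^{2}/d\xi^{2}$ on an interval of length $2\mathbf E$. Let $V$ solve $\ddot V+\Lambda_{n+1}V=0$ with $V(-\mathbf E)=0$; its zeros in $[-\mathbf E,\mathbf E]$ are $\pm\mathbf E$ together with $n$ equally spaced interior points. Since $\theta>0$ we get $\chi_n+\theta>\Lambda_{n+1}$ on all of $(-\mathbf E,\mathbf E)$, so the Wronskian $W=U\dot V-\dot U V$ satisfies $W'=(\chi_n+\theta-\Lambda_{n+1})UV$; the endpoint asymptotics of $U$ make $W\to0$ at both endpoints (the $(\mathbf E\mp\xi)^{\pm1/2}$ factors cancel), so the standard Sturm/Wronskian comparison applies verbatim on each sign interval and shows that the $j$‑th zero of $U$ after $-\mathbf E$ is strictly smaller than the $j$‑th zero of $V$ after $-\mathbf E$. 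For $j=n+1$ the latter equals $\mathbf E$, so $U$ would have an $(n+1)$‑st zero strictly inside $(-\mathbf E,\mathbf E)$, contradicting that it has exactly $n$ interior zeros. Hence $\chi_n<\Lambda_{n+1}$, which is the right member of the quantization bound.

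The left‑hand inequality follows from the mirror argument, except that the comparison equation must now carry the \emph{same} endpoint singularity $\frac14 z^{-2}$ as $\theta$; the natural choice is the Liouville normal form of the Legendre equation. After the affine change $\eta=\frac{\pi}{2\mathbf E}(\xi+\mathbf E)$ sending $(-\mathbf E,\mathbf E)$ onto $(0,\pi)$ it reads $\ddot V+\frac{\pi^{2}}{4\mathbf E^{2}}\big((n+\tfrac12)^{2}+\frac{1}{4\sin^{2}\eta}\big)V=0$, with solution $V=(\sin\eta)^{1/2}P_{n}(\cos\eta)$ vanishing like $z^{1/2}$ at both ends and having exactly $n$ interior zeros. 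Assuming $\chi_n\le\Lambda_n:=\big(\frac{\pi n}{2\mathbf E}\big)^{2}$, the potential of this $V$ dominates $\chi_n+\theta$ pointwise provided
\[
\theta(\xi)-\frac{\pi^{2}}{16\,\mathbf E^{2}\sin^{2}\eta(\xi)}\le\frac{\pi^{2}\,(n+\tfrac14)}{4\,\mathbf E^{2}},\qquad\xi\in(-\mathbf E,\mathbf E);
\]
the Wronskian argument then runs as before with the inequalities reversed and forces the $(n+1)$‑st zero of $U$ to lie strictly beyond $\mathbf E$, which is impossible, so $\chi_n>\Lambda_n$. The sole genuine obstacle is the displayed domination estimate. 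Its left‑hand side is a \emph{bounded} function of $\xi$, because the singular parts of $\theta$ and of $\frac{\pi^{2}}{16\mathbf E^{2}\sin^{2}\eta}$ cancel exactly; moreover $\mathbf E\le\pi/2$ makes the right‑hand side at least $n+\tfrac14$, so it suffices to bound that difference by an absolute constant, uniformly in $q\in(0,1)$ (at $q=0$ the difference is identically $\tfrac14$, and it turns negative as $q\to1$). Proving this uniform bound is a direct, if somewhat laborious, inspection of the rational function $\theta$ together with the change of variables $x\leftrightarrow\eta$ through the incomplete elliptic integral --- precisely the Legendre–elliptic‑integral bookkeeping carried out in \cite{Bonami-Karoui2}, to which I would refer for the detailed computation.
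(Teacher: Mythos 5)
First, note that the paper does not actually prove Lemma~\ref{chi-between2}: it imports it from \cite{Bonami-Karoui2} (with the lower bound in \eqref{withE} credited to \cite{Osipov}), and only records the reformulation \eqref{withE}. Your opening reduction --- applying the increasing bijection $\Psi$ to all three members of \eqref{ineqPhi} to obtain $\frac{\pi n}{2}<\sqrt{\chi_n}\,\E(\sqrt q)<\frac{\pi(n+1)}{2}$ --- is exactly that reformulation and is correct, and your overall strategy (Liouville normal form plus Sturm comparison, with the interval length $2\E(\sqrt q)$ emerging from the change of variable) is indeed the route taken in the cited sources. The upper half of the quantization bound is in good shape modulo one assertion, and the Wronskian treatment of the endpoint degeneracy ($U\sim(\E\mp\xi)^{1/2}$, $\dot U\sim(\E\mp\xi)^{-1/2}$, so $W\to0$) is handled correctly.

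The genuine gaps are the two analytic inputs you announce but do not establish. (a) The positivity of $\theta$ on all of $(-\E,\E)$, which is equivalent to concavity in $\xi$ of $f(\xi)=[(1-x^2)(1-qx^2)]^{1/4}$, is simply ``recorded''; it is true and elementary at $q=0$ (where $\theta=\tfrac14+\tfrac14\sec^2\xi$), but for general $q\in(0,1)$ it requires the explicit computation of $-\ddot f/f$ and is precisely the content you would need to supply. (b) More seriously, the lower half of the quantization bound --- which is the hard direction and the actual substance of the lemma --- rests entirely on the displayed domination estimate $\theta(\xi)-\frac{\pi^2}{16\,\E^2\sin^2\eta}\le\frac{\pi^2(n+\frac14)}{4\,\E^2}$, and you explicitly defer its proof to ``laborious inspection'' and to \cite{Bonami-Karoui2}. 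That is where all the difficulty of the lemma is concentrated: one must show the bounded remainder is controlled uniformly as $q\to1$ (where the weight $1-qx^2$ also degenerates at $x=\pm1$), and even granting a uniform absolute bound $C$ on that remainder, your argument only closes when $C\le n+\tfrac14$, leaving the small values $n=2,3,\dots$ of the stated range unaddressed. So the proposal is a correct and well-organized blueprint of the standard proof, but it is not yet a proof: the crux of the left-hand inequality of \eqref{withE} is cited rather than demonstrated.
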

We refer to \cite{Bonami-Karoui2} for the proof of the previous lemma, but we add some comments. The inequalities \eqref{ineqPhi}
are equivalent  to the fact that
\begin{equation}\label{withE}
 \frac {\pi n}{2\E(\sqrt q)}<\sqrt{\chi_n}<  \frac {\pi(n+1)}{2\E(\sqrt q)}.
\end{equation}
The left hand side is due to Osipov \cite{Osipov}.
Note that $\Phi(0)=0$ and $\Phi(1)=1.$ Also, we should mention that 
\begin{equation}\label{derivé_Psi}
0\leq \Psi'(x)= \frac{\mathbf E(x) - x \mathbf E'(x)}{(\mathbf E(x))^2}=\frac{\mathbf K(x)}{(\mathbf E(x))^2},\quad 0\leq x <1,
\end{equation}
\begin{equation}\label{bounds_Phi'}
0\leq \Phi'(x)=\frac{(\mathbf E(\Phi(x)))^2}{\mathbf K(\Phi(x))}\leq \frac{(\mathbf E(0))^2}{\mathbf K(0)}=\frac{\pi}{2},\quad 0\leq x <1.
\end{equation}
Hence, $\Phi$ is an increasing function on $[0,1].$ Moreover, since ${\displaystyle \frac 2\pi\leq\frac{1}{\mathbf E(x)}\leq 1,}$ then  we have
$$\frac {2x}{\pi}\leq \Psi(x)\leq x.$$
Applying the function $\Phi$ to the previous inequalities, one  gets 
\begin{equation}\label{bounds_Phi}
x\leq \Phi(x)\leq \frac{\pi x}{2} , \quad 0\leq x\leq 1.
\end{equation}

We will use bounds for $\psi_{n, c}$ given in \cite{Bonami-Karoui2}, which have been established under the condition that $(1-q)\sqrt{\chi_n}>\kappa\geq 4$. We leave some flexibility for the choice of the constant $\kappa$ and do not restrict to the choice $\kappa=4$ as in \cite{Bonami-Karoui2}. We will only need estimates at $1$, which we give in the following lemma with a slightly different form compared to \cite{Bonami-Karoui2}.

\begin{lemma} \label{equivalence}
Let  $n\geq 3$. We assume that the condition
\begin{equation}\label{versusChi}
(1-q)\sqrt {\chi_n(c)}>\kappa
\end{equation} is satisfied for some $\kappa\geq 4$. Then,  there exists a constant $\delta(\kappa)$ (independent of $c$ and $n$) such that  one has  the following bounds for $(\psi_{n, c}(1))^2.$
\begin{equation}\label{boundsA}
 \frac{\pi \sqrt{\chi_n}}{2 \mathbf  K(\sqrt{q})}\left(1-\delta(\kappa) \, \varepsilon_n\right) \leq (\psi_n(1))^2 \leq  \frac{\pi \sqrt{\chi_n}}{2 \mathbf  K(\sqrt{q})} \left(1+\delta(\kappa)\, \varepsilon_n\right),\qquad \epsn=\left((1-q)\sqrt{\chi_n}\right)^{-1}.
\end{equation}\end{lemma}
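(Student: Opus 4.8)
\noindent\emph{Proof strategy.}
The plan is to recover the two-sided bound \eqref{boundsA} from the pointwise estimates of $\psi_{n,c}$ near $x=1$ established in \cite{Bonami-Karoui2}, keeping track of how each approximation contributes a relative error of order $\varepsilon_n=((1-q)\sqrt{\chi_n})^{-1}$; note that \eqref{versusChi} forces $q<1$ and $\varepsilon_n<1/\kappa\le 1/4$. First I would write the equation $\mathcal L_c\psi=\chi_n\psi$ in self-adjoint form $\frac{d}{dx}\big[(1-x^2)\psi'\big]+\chi_n(1-qx^2)\psi=0$ on $(-1,1)$ and apply the Liouville transformation: the change of variable $\xi=S(x):=\int_0^x\sqrt{\frac{1-qt^2}{1-t^2}}\,dt$, the incomplete elliptic integral of the second kind, with $S(1)=\mathbf E(\sqrt q)=:\xi_1$, together with the change of unknown $U=[(1-x^2)(1-qx^2)]^{1/4}\psi$, brings the equation to its normal form $\ddot U+(\chi_n+\theta)U=0$ on $(-\xi_1,\xi_1)$, where $\theta$ is bounded on compact subsets and has a $\tfrac14(\xi_1-\xi)^{-2}$-type singularity at $\pm\xi_1$. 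Since $q<1$ there is no interior turning point, $U$ is oscillatory throughout, and the local length scale near $\xi_1$ is $((1-q)\sqrt{\chi_n})^{-1}=\varepsilon_n$.

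Next I would use two matched expansions. In the bulk, the WKB method gives, with some $A>0$ and phase $\varphi_0$,
$$U(\xi)=\frac{A}{\chi_n^{1/4}}\cos\!\big(\sqrt{\chi_n}\,\xi+\varphi_0\big)\big(1+O(\varepsilon_n)\big),$$
the remainder being estimated by a Volterra/Gronwall argument in terms of $\chi_n^{-1}$ times the relevant integral of $|\theta|$; this is precisely where the bounds on $\psi_{n,c}$ from \cite{Bonami-Karoui2} are invoked. Near $x=1$, using $1-x^2\sim2(1-x)$ and $1-qx^2\sim1-q$, the equation $\psi''-\frac{2x}{1-x^2}\psi'+\chi_n\frac{1-qx^2}{1-x^2}\psi=0$ reduces to Bessel's equation of order zero in the variable $z=\sqrt{2\chi_n(1-q)(1-x)}$; as $\psi_{n,c}$ is the solution regular at $x=1$, one gets $\psi_{n,c}(x)=\psi_{n,c}(1)\,J_0(z)\big(1+O(\varepsilon_n)\big)$ on a window $1-x\lesssim1-q$, that is $z\lesssim(1-q)\sqrt{\chi_n}$. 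These two windows overlap exactly when $(1-q)\sqrt{\chi_n}$ is large, i.e. when $\varepsilon_n$ is small.

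Matching the two formulas in the overlap region pins down $A$. Using $[(1-x^2)(1-qx^2)]^{1/4}\sim\sqrt{\xi_1-S(x)}$ and $\xi_1-S(x)\sim\sqrt{2(1-q)(1-x)}$ (hence $\sqrt{\chi_n}(\xi_1-S(x))\sim z$ and $[(1-x^2)(1-qx^2)]^{-1/4}\sim\chi_n^{1/4}z^{-1/2}$) in the bulk expression, and $J_0(z)\sim\sqrt{2/(\pi z)}\cos(z-\pi/4)$ in the Bessel expression, comparing amplitudes and phases forces $\psi_{n,c}(1)\sqrt{2/\pi}=A$, i.e. $\psi_{n,c}(1)^2=\frac\pi2A^2$ (and fixes $\varphi_0$). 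It then remains to compute $A$ from the normalization $\int_{-1}^1\psi_{n,c}^2\,dx=1$: in the Liouville variable this reads $\int_{-\xi_1}^{\xi_1}\frac{U^2}{1-qx^2}\,d\xi=1$, equivalently $\int_{-1}^1\frac{U(S(x))^2}{\sqrt{(1-x^2)(1-qx^2)}}\,dx=1$; inserting the WKB form, using that $\cos^2$ averages to $\tfrac12$ over the $O(\sqrt{\chi_n})$ oscillations and that $\int_{-1}^1\frac{dx}{\sqrt{(1-x^2)(1-qx^2)}}=2\mathbf K(\sqrt q)$, yields $A^2=\frac{\sqrt{\chi_n}}{\mathbf K(\sqrt q)}\big(1+O(\varepsilon_n)\big)$. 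Combining the two, $\psi_{n,c}(1)^2=\frac{\pi\sqrt{\chi_n}}{2\mathbf K(\sqrt q)}\big(1+O(\varepsilon_n)\big)$, which is \eqref{boundsA} once the implied constant is renamed $\delta(\kappa)$.

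I expect the main obstacle to be the uniform error bookkeeping near the singular endpoint $x=1$: there both the WKB ansatz and the Bessel model degenerate and must be glued, so one must verify that the gluing window is nonempty and that the three sources of error --- the WKB remainder, the replacement of the exact coefficients by their leading behaviour, and the large-$z$ asymptotics of $J_0$ --- are simultaneously $O(\varepsilon_n)$ under the single hypothesis \eqref{versusChi} with $\kappa\ge4$. This delicate step is exactly the content of the relevant estimates in \cite{Bonami-Karoui2}; the present lemma is obtained by specializing them at $x=1$ and rewriting the resulting error as the factor $1\pm\delta(\kappa)\varepsilon_n$.
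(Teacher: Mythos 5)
Your sketch is consistent with the paper: the authors do not reprove this lemma but simply invoke Theorem 2 of \cite{Bonami-Karoui2}, and the Liouville-transformation/WKB argument you outline (normal form $U''+(\chi_n+\theta)U=0$ with $\xi=\int_0^x\sqrt{(1-qt^2)/(1-t^2)}\,dt$, Bessel matching at $x=1$, normalization producing $\mathbf K(\sqrt q)$) is exactly the mechanism the introduction attributes to that reference. Your amplitude computation correctly recovers the constant $\frac{\pi\sqrt{\chi_n}}{2\mathbf K(\sqrt q)}$ and the relative error scale $\varepsilon_n=((1-q)\sqrt{\chi_n})^{-1}$, with the uniform error bookkeeping deferred to \cite{Bonami-Karoui2} just as the paper itself does.
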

We refer to  \cite{Bonami-Karoui2}, Theorem 2, for the  proof. Explicit values for the constant $\delta (\kappa)$  can also be deduced from \cite{Bonami-Karoui2}. We can choose
\begin{equation}\label{delta}
\delta(\kappa)=\eta\left( 2+\frac \eta{\kappa}\right), \qquad \eta = C(\kappa)\left(\frac{\beta }{1+(1- \kappa^{-1}\beta)^{1/2}}+\sqrt 2 \alpha (1+\alpha \kappa^{-1})\right)
\end{equation}
with $C(\kappa)^{-1}= (1- \kappa^{-1}\beta)^{1/2}-\sqrt 2 \alpha \kappa^{-1}(1+\alpha \kappa^{-1}),\,\,  \alpha=1.5,\,\, \beta=0.35.$\\

In any case, we see that the theoretical values of $\delta(\kappa)$  are larger than $4.73$.  We find approximatively $\delta(4)\approx 77.2$, $\delta(12)\approx 7.6$.  Numerical tests (see Example  1 in Section 4) indicate that the numerical quantities $\delta(\kappa)$, for which one has equality in \eqref{delta},  are much smaller.

\smallskip
In \cite{Bonami-Karoui1}, we have proved  that 
\begin{equation}
|A| =|\psi_{n, c}(1)|\chi_n(c)^{-1/4}\leq 2\qquad \mbox{for} \quad c\leq \frac{\pi(n+1)}{2}.
\end{equation}
So in particular the right hand side bound of \eqref{boundsA}  is not accurate when $\kappa$ is small.
Lemma \ref{equivalence} expresses the fact that, under some condition depending on a parameter $\kappa$, we have
\begin{equation}
\label{approxpsi(1)}
(\psi_{n,\tau}(1))^2\approx \frac{\pi\sqrt{\chi_n(\tau)}}{2 \mathbf  K(\sqrt{q(\tau)})}=\frac{\pi\tau}{2 \sqrt{q(\tau)}\mathbf  K(\sqrt{q(\tau)})}.
\end{equation}
The previous formula for the approximation of the quantity $\psi_{n,c}(1)$ still requires the approximation of  $\sqrt{q(\tau)}.$
For this last quantity, we already have the double inequalities \eqref{ineqPhi}. We may write
\begin{equation}\label{approxq}
\sqrt{q(\tau)}\approx\sqrt{\widetilde q(\tau)}= \Phi\left(\frac{2\tau}{\pi (n+1/2)}\right).
\end{equation}
An  error bound of the  previous approximation formula is given in  \cite{Bonami-Karoui2} by 
\begin{equation}\label{error-q}
| \sqrt{q(c)}- \sqrt{\widetilde q(c)}| \leq  \frac{c}{2\sqrt{\chi_n}\sqrt{\widetilde \chi_n}}.
\end{equation}
 Also, in \cite{Bonami-Karoui2}, we have given an explicit 
 formula for the approximation of $\sqrt{\chi_n(c)}$ together with its associated error,
 \begin{equation}
 \label{approxchi}
 \sqrt{\chi_n(c)}\approx \sqrt{\widetilde \chi_n(c)} =  \frac{c}{\Phi\left(\frac{2c}{\pi (n+1/2)}\right)},\quad n\geq \frac{2c}{\pi},
 \end{equation}
\begin{equation}\label{error-chi}
 \left|\sqrt{\chi_n(c)}-\sqrt{\widetilde \chi_n(c)}\right|\leq \frac{1}{2}.
\end{equation}
We should mention that in \cite{Bonami-Karoui2}, we have further improved the error bounds \eqref{error-q}  and \eqref{error-chi}
in the case of large values of $n.$ The improved asymptotic error bounds are given as follows,
\begin{equation}\label{approxq-Chi}
|\sqrt{q(c)}-\sqrt{\widetilde q(c)}|\leq  \frac{c\kappa}{(1-q)\chi_n\sqrt{\widetilde \chi_n }},\qquad |\sqrt{\widetilde \chi_n(c)}-\sqrt{ \chi_n(c)}|\leq  \frac{\kappa}{(1-q)\sqrt{\chi_n }},
\end{equation}
for some constant $\kappa.$ Numerical evidence indicates that in practice, the actual  error of the  approximation scheme \eqref{approxq} is much 
smaller than the previous theoretical error bound, see example 2 of the numerical results section. \\

 We need to translate Condition \eqref{versusChi} in terms of the parameters $n,c$, which can be done by using
 [Proposition 4, \cite{Bonami-Karoui2}], where the following inequality has been given. 
For $n\geq 2$ and $q<1,$
\begin{equation}\label{NtoCh}
(1-q)\sqrt{\chi_n}\geq \frac {(n-\frac{2c}{\pi})-e^{-1}}{\log n +5},
\end{equation}
A further improvement of the previous inequality is given by the following lemma.
\begin{lemma}\label{comparison} Let $n\geq 3$, $q<1$ and $\kappa\geq 4$. Then  one of the following conditions,
\begin{equation}
c\leq n-\kappa,
\end{equation}
\begin{equation}\label{NtoCh2}
\frac{\pi n} {2}-c> \frac{\kappa }4 (\ln (n)+9),
\end{equation} implies the inequality  \eqref{versusChi}, that is,
\begin{equation*}
(1-q)\sqrt {\chi_n(c)}>\kappa.
\end{equation*}
Moreover, if we assume  that $c>\frac {n+1}2$, then the condition $\frac{\pi n} {2}-c> \frac{\kappa }4 (\ln (n)+6)$ is sufficient.
\end{lemma}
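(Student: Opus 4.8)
The plan is to treat the two sufficient conditions of the lemma separately, since they lie in genuinely different regimes.

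\emph{The case $c\le n-\kappa$ is elementary.} Since $c>0$, this hypothesis forces $\kappa<n$. As $t\mapsto t-c^{2}/t$ is increasing, the bound $\chi_n\ge n(n+1)$ from \eqref{bounds1-chi} gives
\[
(1-q)\sqrt{\chi_n}=\sqrt{\chi_n}-\frac{c^{2}}{\sqrt{\chi_n}}\ \ge\ \frac{n(n+1)-c^{2}}{\sqrt{n(n+1)}},
\]
and then $c^{2}\le(n-\kappa)^{2}$ together with $\sqrt{n(n+1)}<n+1$ bounds the right-hand side below by $\dfrac{n(1+2\kappa)-\kappa^{2}}{n+1}$, which is $>\kappa$ precisely because $n>\kappa$. (This also re-proves $q<1$ here.)

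\emph{The case \eqref{NtoCh2}.} The idea is to transplant the whole problem onto the single variable $k_{0}:=\Phi\!\left(\frac{2c}{\pi n}\right)\in(0,1)$, which is well defined since \eqref{NtoCh2} forces $c<\frac{\pi n}{2}$. By Lemma~\ref{chi-between2}, $\sqrt q<k_{0}$; combined with the identity $c=\frac{\pi n}{2}\cdot\frac{k_{0}}{\E(k_{0})}$ (that is, $\Psi(k_{0})=\frac{2c}{\pi n}$) and with the fact that $s\mapsto\frac1s-s$ decreases on $(0,1)$, this yields
\[
(1-q)\sqrt{\chi_n}=c\left(\frac1{\sqrt q}-\sqrt q\right)>c\cdot\frac{1-k_{0}^{2}}{k_{0}}=\frac{\pi n\,(1-k_{0}^{2})}{2\,\E(k_{0})}.
\]
On the other hand the same identity turns \eqref{NtoCh2} into $\frac{\pi n\,(\E(k_{0})-k_{0})}{2\,\E(k_{0})}=\frac{\pi n}{2}-c>\frac{\kappa}{4}(\ln n+9)$, equivalently $1-\Psi(k_{0})=\frac{\E(k_{0})-k_{0}}{\E(k_{0})}>\frac{\kappa(\ln n+9)}{2\pi n}$. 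Since the target value $\kappa$ and this last quantity share the factor $\frac{\pi n}{2\E(k_{0})}$, it suffices to establish the purely analytic estimate $\dfrac{1-k_{0}^{2}}{\E(k_{0})-k_{0}}>\dfrac{4}{\ln n+9}$, which would then deliver \eqref{versusChi}.

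For this one needs a sharp control of $\E$ near $k=1$. Starting from $\E(k)-1=(1-k^{2})\int_{0}^{\pi/2}\frac{\sin^{2}\theta\,d\theta}{\sqrt{1-k^{2}\sin^{2}\theta}+\cos\theta}$ and splitting the integral at the $\theta$ where $\cos\theta=\sqrt{1-k^{2}}$ — bounding the denominator below by $2\cos\theta$ on the left part and by $\sqrt{1-k^{2}}$ on the right part — I would obtain the explicit inequality
\[
\E(k)-1\le(1-k^{2})\left(\tfrac14\ln\tfrac{1+k}{1-k}+\tfrac{\pi}{2}\right),\qquad 0\le k<1,
\]
which has the right order $(1-k)\bigl(\tfrac12\ln\tfrac1{1-k}+O(1)\bigr)$ as $k\to1$ — the coefficient $\tfrac12$ of the logarithm being the crucial feature. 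Using this bound together with $\E(k_{0})\ge1$ in $\E(k_{0})-k_{0}>\frac{\kappa(\ln n+9)}{2\pi n}$, a contrapositive-plus-monotonicity argument first yields $1-k_{0}>\frac{\kappa}{\pi n}$; reinserting that, together with the displayed bound, into $\frac{1-k_{0}^{2}}{\E(k_{0})-k_{0}}$ makes the leading $\tfrac12\ln n$ terms cancel, leaving an inequality between absolute constants that holds thanks to the slack $\kappa\ge4$. For $k_{0}$ bounded away from $1$, where the displayed bound is wasteful, one argues instead from $\E(k_{0})-k_{0}\le\frac{\pi}{2}$ and a crude lower bound for $1-k_{0}^{2}$; there the constant $9$ is comfortably large. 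The refined constant $6$ under the extra hypothesis $c>\frac{n+1}{2}$ is more delicate: that hypothesis makes $k_{0}>\Phi(1/\pi)$ bounded away from $0$ and, with \eqref{NtoCh2}, makes $n$ large, so one can replace the crude inputs by the asymptotic error bounds \eqref{approxq-Chi} and a sharper form of the elliptic estimate, trimming the residual constant from $9/2$ to $3$. The main obstacle throughout is exactly this constant-bookkeeping: every elliptic-integral and $\Phi$-estimate must be carried with enough precision that, once the $\tfrac12\ln n$ contributions cancel, the leftover absolute constant stays below $9/2$ (respectively $3$), the margin being furnished by $\kappa\ge4$.
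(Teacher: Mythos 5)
Your strategy is essentially parallel to the paper's: both proofs hinge on the same key analytic fact, namely $\E(k)-1\le(1-k^2)\left(\tfrac14\ln\tfrac1{1-k^2}+O(1)\right)$ with the crucial coefficient $\tfrac14$, proved by exactly the splitting of the integral that you describe (this is \eqref{EEq2} in the paper). Your explicit treatment of the case $c\le n-\kappa$ via \eqref{bounds1-chi} and the monotonicity of $t\mapsto t-c^2/t$ is correct, and is in fact more than the paper's proof writes down. The genuine difference is organizational: you transplant everything onto $k_0=\Phi\left(\frac{2c}{\pi n}\right)$ via Lemma \ref{chi-between2}, reduce to $\frac{1-k_0^2}{\E(k_0)-k_0}>\frac4{\ln n+9}$, and then need a bootstrap ($1-k_0>\frac{\kappa}{\pi n}$) before the $\tfrac12\ln n$ terms cancel. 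The paper stays with $\sqrt q$ itself, writes $1-\frac{2c}{\pi n}<1-\sqrt q+\frac{\E(\sqrt q)-1}{\E(\sqrt q)}$ from \eqref{withE}, and then removes the term $(1-q)\ln\frac1{1-q}$ in one stroke with the elementary inequality $s\ln(1/s)\le\frac1n+s\ln(n/e)$; this avoids any bootstrap and makes the constant $9$ fall out transparently.

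Two points must be settled before your version is a proof. First, your elliptic estimate carries the additive constant $\tfrac\pi2\approx1.57$ where the paper's \eqref{EEq2} has $\ln 2\approx0.69$; tracked through, the inequality you ultimately need is roughly $\ln\frac{2}{1-k_0}+2\pi+\frac{4}{1+k_0}\le\ln n+9$, and with the bootstrap bound $1-k_0>\frac{\kappa}{\pi n}$ the margin can shrink to a few hundredths. It does close for $\kappa\ge4$, but only just, so the ``inequality between absolute constants'' you defer is genuinely the crux and cannot be waved away; it is safer to sharpen your second piece by bounding $\int_{\theta_0}^{\pi/2}\sin^2\theta\,d\theta\le\cos\theta_0=\sqrt{1-k^2}$, which replaces $\tfrac\pi2$ by $1$, or to reproduce the paper's $\ln 2$. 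Second, your route to the constant $6$ under $c>\frac{n+1}2$ does not work as described: the bounds \eqref{approxq-Chi} are themselves established under a hypothesis of the type $(1-q)\sqrt{\chi_n}\ge\kappa$, so invoking them here is circular, and $c>\frac{n+1}2$ does not force $n$ to be large. The paper's mechanism is simpler and direct: $c>\frac{n+1}2$ forces $q>\frac1\pi$, and one replaces the worst-case bound $\frac{\E(\sqrt q)}{1+\sqrt q}\le\frac\pi2$ by its value at $q=\frac1\pi$ inside the already-derived sufficient condition; you would need the analogous observation that $k_0$ is then bounded below, fed into your own constant bookkeeping.
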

\begin{proof}
Let $\gamma=\frac{2c}{\pi n}$. It follows from \eqref{withE} that
\begin{equation}\label{intermediate}
1-\gamma < 1-\sqrt q+\frac{ \E(\sqrt q)-1}{\E(\sqrt q)}.
\end{equation}
We claim that
\begin{equation}
\label{EEq2}
\mathbf E(x)-1 \leq  (1-x^2)\left( \frac{1}{4}\ln \left(\frac 1{1-x^2}\right)+\ln 2\right).
\end{equation}
Let us assume this and go on with the proof. It follows that
\begin{equation}
1-\gamma< \frac{1-q}{\E(\sqrt q)} \left( \frac{1}{4}\ln \left(\frac 1{1-q}\right)+\frac {\E(\sqrt q)}{1+\sqrt q}+\ln 2\right).
\end{equation}
We then use the elementary inequality, valid for $0<s<1$,
$$ s\ln ( 1/s)\leq 1/n + s\ln (n/e).$$
It implies that $$1-\gamma-\frac 1{4n \E(\sqrt q)}<\frac{1-q}{\E(\sqrt q)} \left( \frac{1}{4}\ln (n/e)+\frac {\E(\sqrt q)}{1+\sqrt q}+\ln 2\right).$$
We use also   \eqref{withE} to conclude that
\begin{equation}\label{versus1-q}
(1-q)\sqrt{\chi_n}\geq \frac{\pi n}{2\E(\sqrt q)}(1-q)>\kappa,
\end{equation}
whenever
$$\frac{\pi n} {2}-c> \kappa \left(\frac 14 \ln (n/e)+\frac {\E(\sqrt q)}{1+\sqrt q}+\ln 2\right)+ \frac 1{4n}.$$
This is the case, in particular, when $\frac{\pi n} {2}-c> \frac \kappa 4 \left(\ln (n)+9\right)$, using the fact that $\frac {\E(\sqrt q)}{1+\sqrt q}\leq \frac \pi 2$.

The condition $c\geq \frac {n+1}2$ implies that $q>\frac 1\pi$. Then, by  using the value of $\E(\sqrt {\pi^{-1}})$, the constant $9$ in (\ref{NtoCh2}) can be replaced by $6$. It remains to prove \eqref{EEq2}. We write
\begin{eqnarray}
\mathbf E(x)-1 &\leq & (1-x^2)\int_0^1\frac 1{(\sqrt{1-x^2t^2}+\sqrt{1-t^2})}\frac{t\, dt}{\sqrt{1-t^2}}\label{intermediate2}\\
&=&\int_0^1\frac {ds}{(1-x^2+s^2x^2)^{\frac 12}+s}.
\end{eqnarray}
We cut the last integral into two parts. For the first one, from $\sqrt{1-x^2}$ to $1$, we replace the denominator by $2s$ and find the logarithmic term. For the second one, we replace the denominator by $\sqrt{1-x^2}+s$ and find $\ln 2$.
\end{proof}

We will need another inequality of the same type:
\begin{equation}\label{comparisonK}
1-\frac{2c}{\pi n}\leq 2(1-q)\K(\sqrt q).
\end{equation}
This is a consequence of \eqref{intermediate}, using the fact that $\E(x)-1\leq (1-x^2)\K(x)$, which comes directly from \eqref{intermediate2}.

\section{Sharp decay estimates of eigenvalues $\lambda_n(c).$}

In this section, we use some of the estimates we have given in the previous section and
we prove a sharp  super-exponential decay rate of  the eigenvalues $(\lambda_n(c))_n.$
We first recall that these $\lambda_n(c)$ are governed by the following differential equation,
see \cite{Fuchs} or the more recent reference \cite {Xiao},
\begin{equation}\label{eq1lambda_n}
\partial_c \ln \lambda_n(c)=\frac{2|\psi_{n,c}(1)|^2}{c}.
\end{equation}
As a consequence, for fixed $n$ there exists a unique value of $c$ for which $\lambda_n(c)=1/2.$ We denote this value of $c$ by $c_n^*$. We know  from \cite{Landau} that it can be bounded below and above, namely
 \begin{equation}
 \label{validityC0}
 \frac{\pi}{2}(n-1)\leq c^*_n\leq \frac{\pi}{2} (n+1)\quad\mbox{ with}\quad \lambda_n(c^*_n)=\frac{1}{2}.
 \end{equation}
 By combining (\ref{eq1lambda_n}) and (\ref{validityC0}), one gets
 \begin{equation}
 \label{eqq2lambda_n}
 \lambda_n(c)=\frac{1}{2} \exp\left(-2\int_c^{c^*_n} \frac{(\psi_{n,\tau}(1))^2}{\tau} \, d\tau \right).
  \end{equation}
  Let us recall the following definition.
   \begin{equation}\label{decay2}
\widetilde { \lambda_n}(c) =  \frac{1}{2} \exp\left(-\frac{\pi^2(n+\frac 12)}{2} \int_{\Phi\left(\frac{2c}{\pi (n+\frac 12)}\right)}^1 \frac{1}{t(\mathbf E(t))^2}\, dt \right).
\end{equation}

Our main result is the following theorem.
\begin{theorem}\label{Required0}
There exist three constants $\delta_1\geq 1, \delta_2, \delta_3, \geq 0$ such that, for $n\geq 3$ and $c\leq \frac{\pi n}2$,
\begin{equation}\label{required0}
\delta_1 ^{-1}n^{-\delta_2}\left(\frac c{c+1}\right)^{\delta_3}\leq \frac{\widetilde { \lambda_n}(c)}{ \lambda_n(c)}\leq \delta_1 n^{\delta_2}\left(\frac c{c+1}\right)^{-\delta_3},
\end{equation}
\end{theorem}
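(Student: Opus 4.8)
The plan is to compare the two exponents in \eqref{eqq2lambda_n} and \eqref{decay2} directly, that is, to estimate
\[
\left| \int_c^{c^*_n} \frac{(\psi_{n,\tau}(1))^2}{\tau}\, d\tau - \frac{\pi^2(n+\tfrac12)}{4} \int_{\Phi\left(\frac{2c}{\pi(n+1/2)}\right)}^1 \frac{1}{t(\mathbf E(t))^2}\, dt \right|,
\]
and to show that this difference is bounded by $\delta_2 \ln n - \delta_3 \ln\frac{c}{c+1} + O(1)$; exponentiating then gives \eqref{required0}. The first step is to rewrite the model integral in \eqref{decay2} as an integral over the variable $\tau$: with $\widetilde q(\tau) = \Phi\big(\frac{2\tau}{\pi(n+1/2)}\big)^2$, the substitution $t = \sqrt{\widetilde q(\tau)}$ together with the derivative formulas \eqref{bounds_Phi'} for $\Phi'$ turns $\frac{\pi^2(n+1/2)}{4}\int \frac{dt}{t(\mathbf E(t))^2}$ into $\int \frac{g(\tau)}{\tau}\,d\tau$ where $g(\tau) = \frac{\pi\sqrt{\widetilde\chi_n(\tau)}}{2\mathbf K(\sqrt{\widetilde q(\tau)})}$, using $\widetilde\chi_n(\tau) = \tau^2/\widetilde q(\tau)$ from \eqref{approxchi}. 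So the model exponent is exactly the integral of $\frac{2}{\tau}$ times the right-hand side of the approximation \eqref{approxpsi(1)}, but with $q$ replaced by $\widetilde q$ and $\chi_n$ by $\widetilde\chi_n$, and with the upper limit $c^*_n$ replaced by the value $c^\sharp$ at which $\widetilde q(c^\sharp) = 1$, i.e. $c^\sharp = \frac{\pi(n+1/2)}{2}$. Thus the comparison splits into three pieces: (a) the error between $(\psi_{n,\tau}(1))^2$ and $\frac{\pi\sqrt{\chi_n}}{2\mathbf K(\sqrt q)}$ from Lemma \ref{equivalence}; (b) the error between $\frac{\pi\sqrt{\chi_n}}{2\mathbf K(\sqrt q)}$ and $\frac{\pi\sqrt{\widetilde\chi_n}}{2\mathbf K(\sqrt{\widetilde q})}$ coming from replacing $q,\chi_n$ by their approximants via \eqref{error-q}–\eqref{error-chi}; and (c) the boundary discrepancy between integrating up to $c^*_n$ versus up to $c^\sharp$, controlled by \eqref{validityC0} which gives $|c^*_n - c^\sharp| \leq \pi$.

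\textbf{Handling the range where Lemma \ref{equivalence} fails.} The estimate \eqref{boundsA} requires \eqref{versusChi}, i.e. $(1-q)\sqrt{\chi_n} > \kappa$, which by Lemma \ref{comparison} holds as soon as $\frac{\pi n}{2} - \tau > \frac{\kappa}{4}(\ln n + 9)$. Near the upper endpoint — for $\tau$ within $O(\ln n)$ of $\frac{\pi n}{2}$ — we cannot use the sharp two-sided bound. On that short interval I would instead use the crude bound $(\psi_{n,\tau}(1))^2 \leq 4\sqrt{\chi_n(\tau)} \leq 4\sqrt{n(n+1)+c^2}$ from \cite{Bonami-Karoui1} and \eqref{bounds1-chi}; integrating $\frac{2}{\tau}$ times this over an interval of length $O(\ln n)$ starting near $\tau \asymp n$ contributes only $O(\ln n)$ to the exponent, hence a factor $n^{O(1)}$ to $\lambda_n/\widetilde\lambda_n$ — exactly the shape of \eqref{required0}. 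The same crude bound applied to the model integral $g(\tau)$ on the corresponding interval gives a matching $O(\ln n)$ contribution (here one also needs $\mathbf K(\sqrt{\widetilde q}) \gtrsim 1$ and the comparison \eqref{comparisonK} to keep $g(\tau)$ from blowing up faster than polynomially as $\widetilde q \to 1$; note $\int_0^1 \frac{dt}{t(\mathbf E(t))^2}$ diverges logarithmically at $0$ but that endpoint corresponds to $\tau \to 0$, not to the plunge region, so it is irrelevant once $c$ is bounded below — this is where the $\left(\frac{c}{c+1}\right)^{\pm\delta_3}$ factor enters, absorbing the behaviour for small $c$).

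\textbf{The main integration estimate.} On the bulk interval $c \leq \tau \leq \frac{\pi n}{2} - \frac{\kappa}{4}(\ln n + 9)$ (assuming $c$ itself lies below this; otherwise the bulk is empty and the crude bound above already finishes the proof), Lemma \ref{equivalence} gives $(\psi_{n,\tau}(1))^2 = \frac{\pi\sqrt{\chi_n}}{2\mathbf K(\sqrt q)}(1 + O(\varepsilon_n(\tau)))$ with $\varepsilon_n(\tau) = ((1-q)\sqrt{\chi_n})^{-1}$, and by \eqref{NtoCh} $\varepsilon_n(\tau) \lesssim \frac{\ln n}{n - 2\tau/\pi}$. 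Combining with pieces (b) and (c), the relative error in the integrand is $O\!\left(\frac{\ln n}{n - 2\tau/\pi} + \frac{1}{\sqrt{\chi_n}} \cdot \frac{1}{\text{stuff}}\right)$; multiplying by $\frac{g(\tau)}{\tau}$ and integrating, the dangerous factor $\frac{1}{n - 2\tau/\pi}$ integrates to another $O(\ln n)$ near the upper cutoff, while $g(\tau)/\tau = \frac{\pi\sqrt{\widetilde\chi_n}}{2\tau\mathbf K(\sqrt{\widetilde q})}$ is bounded on the bulk by a constant (using $\sqrt{\widetilde\chi_n} \asymp \tau/\widetilde q \asymp \tau$ and $\mathbf K \geq \pi/2$). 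So the total discrepancy in the exponents is $O(\ln n)$ plus a term uniformly bounded for $c$ bounded below, giving \eqref{required0}. \textbf{The main obstacle} I expect is bookkeeping near the upper endpoint $\tau \to \frac{\pi n}{2}$: there $q \to 1$, the elliptic integral $\mathbf K(\sqrt q) \to \infty$, the error parameter $\varepsilon_n(\tau) \to \infty$, and the model density $\frac{1}{t(\mathbf E(t))^2}$ stays bounded — so one must carefully check that the contributions of \emph{both} exponents over this transitional strip are separately $O(\ln n)$ and not something worse, and that the crude $\chi_n^{1/4}$-type bound is genuinely enough there. A secondary technical point is verifying the change-of-variables identity turning \eqref{decay2} into $\int \frac{g(\tau)}{\tau}d\tau$ cleanly, including matching the endpoint $\Phi\big(\frac{2c}{\pi(n+1/2)}\big) \leftrightarrow \sqrt{\widetilde q(c)}$.
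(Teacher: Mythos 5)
Your overall strategy is the same as the paper's: the substitution $t=\Phi\bigl(\tfrac{2\tau}{\pi(n+1/2)}\bigr)$ turning the model exponent into $\int \frac{g(\tau)}{\tau}\,d\tau$ is exactly the paper's identity $\mathcal J_l(c)=l\,\mathcal J(c/l)$ with $l=n+\tfrac12$; the crude bound $(\psi_{n,\tau}(1))^2\leq 4\sqrt{\chi_n}$ on the strip of length $O(\ln n)$ below $c_n^*$ is the paper's ``tails'' step; and the bulk is treated via Lemma \ref{equivalence} in both cases. However, there is a genuine quantitative gap in your main integration estimate, and as written it does not deliver the $n^{\delta_2}$ form of \eqref{required0}. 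First, the claim that $g(\tau)/\tau$ is bounded by a constant on the bulk is false, and the justification $\sqrt{\widetilde\chi_n}\asymp\tau/\widetilde q\asymp\tau$ is wrong: one has $\sqrt{\widetilde\chi_n}=\tau/\sqrt{\widetilde q}$ and $\sqrt{\chi_n}\geq n$ always, so $g(\tau)/\tau\asymp n/(\tau\,\mathbf K)$, which is of order $n/\tau\gg 1$ whenever $\tau\ll n$. Second, and more seriously, bounding the relative error by $\varepsilon_n\lesssim \frac{\ln n}{\,n-2\tau/\pi\,}$ via \eqref{NtoCh} and then integrating against the density produces $\ln n\cdot\int \frac{ds}{s(1-s)}\asymp (\ln n)^2+\ln n\,\ln^+(1/c)$, not $O(\ln n)$: the factor $\frac{1}{n-2\tau/\pi}$ already integrates to $O(\ln n)$ by itself, so the extra $\ln n$ in the numerator is fatal. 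Since $e^{C(\ln n)^2}=n^{C\ln n}$ is not $n^{O(1)}$, this loses the theorem.

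The fix — which is what the paper does — is to never pass through \eqref{NtoCh} inside the error integral. The absolute error from Lemma \ref{equivalence} is $\frac{\pi\sqrt{\chi_n}}{2\mathbf K(\sqrt q)}\,\delta(\kappa)\varepsilon_n$ with $\varepsilon_n=((1-q)\sqrt{\chi_n})^{-1}$, so the $\sqrt{\chi_n}$ cancels exactly and the error density is $\frac{\delta(\kappa)}{\tau(1-q)\mathbf K(\sqrt q)}$ with no logarithm. Then \eqref{comparisonK} gives $2(1-q)\mathbf K(\sqrt q)\geq 1-\tfrac{2\tau}{\pi n}$, and the substitution $s=\tfrac{2\tau}{\pi n}$ reduces the error integral to $2\delta(\kappa)\int \frac{ds}{s(1-s)}=O(\ln n)+O(\ln^+(1/c))$, using $1-\tfrac{2c_n^\kappa}{\pi n}\gtrsim \tfrac{\ln n}{n}$ at the upper limit. (The inequality \eqref{NtoCh} is only needed to verify the hypothesis $(1-q)\sqrt{\chi_n}>\kappa$, i.e.\ to locate $c_n^\kappa$, not to bound the error.) A secondary remark: for your piece (b) you propose to use \eqref{error-q}--\eqref{error-chi} to replace $q,\chi_n$ by $\widetilde q,\widetilde\chi_n$ inside $\frac{1}{\sqrt q\,\mathbf K(\sqrt q)}$; this requires controlling the derivative of $x\mapsto x\mathbf K(x)$, which blows up as $q\to 1$. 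The paper avoids this by using the monotonicity of $x\mapsto x\mathbf K(x)$ together with the two-sided bound of Lemma \ref{chi-between2}, sandwiching the main term between $\mathcal J_{n}$ and $\mathcal J_{n+1}$ and then comparing these to $\mathcal J_{n+1/2}$ by a separate elementary lemma; you would be well advised to do the same.
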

The factor $\frac c{c+1}$ can be replaced by $1$ when $c>1$ and replaced by $c$ when $c<1$. We have written the formula this way  to avoid to have to distinguish between the two cases, $c\geq 1$ and $0<c<1.$   It is simpler to write equivalent  inequalities for logarithms, which is done in the following proposition. We keep the same notations for constants, which are of course not the same. We note $\ln^+ (x)$  the positive part of the Logarithm, that is, $\ln^+ (x)=\max(0, \ln (x))$. The following theorem
   is required in the proof of the main Theorem \ref{Required0}.
\begin{theorem}\label{Required}
There exist three non negative constants $\delta_1, \delta_2, \delta_3$ such that, for $n\geq 3$ and $c\leq \frac{\pi n}2$, we have
\begin{equation}\label{required}
\int_c^{c^*_n} \frac{(\psi_{n,\tau}(1))^2}{\tau} \, d\tau =\frac{\pi^2(n+\frac 12)}{4} \int_{\Phi\left(\frac{2c}{\pi (n+\frac 12)}\right)}^1 \frac{1}{t(\mathbf E(t))^2}\, dt  +\mathcal  E,
\end{equation}
with
\begin{equation}
|\mathcal E|\leq \delta_1+\delta_2 \ln (n)+\delta_3 \ln^+ (1/ c).
\end{equation}\end{theorem}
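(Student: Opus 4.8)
The plan is to compare the two integrals in \eqref{required} directly, by introducing intermediate approximations and controlling each error term, while tracking how the condition \eqref{versusChi} of Lemma \ref{equivalence} fails only on a controlled range of $\tau$. First I would split the integration interval $(c, c_n^*)$ into a ``plunge'' part and a ``bad'' part. On the bad part, where Condition \eqref{versusChi} is not guaranteed — by Lemma \ref{comparison} this is essentially the range where $\frac{\pi n}2 - \tau$ is at most a constant multiple of $\ln n$, together with the tiny window near $c_n^*$ where $\tau$ may exceed $\frac{\pi n}2$ — I would use the crude bound $|\psi_{n,\tau}(1)|\leq 2\chi_n(\tau)^{1/4}$ from \cite{Bonami-Karoui1}, which together with $\chi_n(\tau)\leq n(n+1)+\tau^2$ gives $(\psi_{n,\tau}(1))^2/\tau \leq 4\sqrt{\chi_n(\tau)}/\tau = O(n/\tau)$; integrating over an interval of length $O(\ln n)$ located near $\tau\asymp n$ contributes only $O(\ln n)$ to $\mathcal E$. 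The same crude bound applied to the companion integral $\frac{\pi^2(n+\frac12)}4\int \frac{dt}{t\E(t)^2}$ over the corresponding $t$-range (using $\E(t)\geq 1$ and the change of variable via $\Phi$) contributes likewise $O(\ln n)$. Thus both ``bad-part'' contributions are absorbed into $\delta_2\ln n$.

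On the good part, Lemma \ref{equivalence} gives $(\psi_{n,\tau}(1))^2 = \frac{\pi\sqrt{\chi_n(\tau)}}{2\mathbf K(\sqrt{q(\tau)})}(1+O(\varepsilon_n(\tau)))$ with $\varepsilon_n(\tau) = ((1-q(\tau))\sqrt{\chi_n(\tau)})^{-1}$. So the next step is to show $\int \frac{\sqrt{\chi_n(\tau)}}{\tau\mathbf K(\sqrt{q(\tau)})}\varepsilon_n(\tau)\,d\tau$ is bounded by a constant plus $O(\ln n)$: writing $\sqrt{\chi_n(\tau)}/\tau = 1/\sqrt{q(\tau)}$ and using \eqref{comparisonK} to bound $1/((1-q)\mathbf K(\sqrt q))$, the integrand is $O\!\big(\frac{1}{\tau\,(1-\frac{2\tau}{\pi n})\sqrt{q(\tau)}}\big)$, and since $1-\frac{2\tau}{\pi n}$ is comparable to $(c_n^*-\tau)/n$ up to the boundary layer handled above, the integral produces a logarithmic singularity $O(\ln n)$ from the upper endpoint and an $O(\ln^+(1/c))$ term from the lower endpoint $\tau = c$ (the $1/\tau$ factor), and is otherwise bounded. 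This is where the $\delta_3\ln^+(1/c)$ term is born.

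After discarding the $\varepsilon_n$ error, one is left with comparing $\int_c^{c_n^*} \frac{d\tau}{\sqrt{q(\tau)}\,\mathbf K(\sqrt{q(\tau)})}$ with $\frac{\pi^2(n+\frac12)}4\int_{\Phi(2c/(\pi(n+\frac12)))}^1 \frac{dt}{t\,\mathbf E(t)^2}$. Here I would perform the change of variable $t = \Phi\big(\frac{2\tau}{\pi(n+\frac12)}\big)$, i.e.\ $\tau = \frac{\pi(n+\frac12)}2 \Psi(t)$, $d\tau = \frac{\pi(n+\frac12)}2 \mathbf K(t)/\mathbf E(t)^2\,dt$, which is exactly the structure of the target integral; since $\Psi(t) = t/\mathbf E(t)$, the target integrand $\frac{1}{\Psi(t)}\cdot\frac{d\tau}{\mathbf K(t)} $ reproduces $\frac{d\tau}{\sqrt{\widetilde q}\,\mathbf K(\sqrt{\widetilde q})}$ with $\sqrt{\widetilde q(\tau)} = \Phi\big(\frac{2\tau}{\pi(n+\frac12)}\big)$, which is precisely the approximation \eqref{approxq}. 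So the residual comparison is between $q(\tau)$ and $\widetilde q(\tau)$: using the error bound \eqref{error-q}, $|\sqrt{q(\tau)}-\sqrt{\widetilde q(\tau)}| \leq \frac{\tau}{2\sqrt{\chi_n(\tau)\widetilde\chi_n(\tau)}}$, and the fact that the map $k\mapsto \frac{1}{k\mathbf K(k)}$ has controlled derivative away from $k=1$ (with the same logarithmic blow-up near $k=1$ and the $1/k$ blow-up near $k=0$ already accounted for), one gets that the difference contributes at most $\delta_1 + \delta_2\ln n + \delta_3\ln^+(1/c)$.

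The main obstacle I anticipate is the uniform control near the two endpoints simultaneously: near $\tau = c_n^*$ the integrand $\frac{1}{\sqrt q\,\mathbf K(\sqrt q)}$ is bounded (since $\mathbf K(1^-)=\infty$ makes it vanish) but the \emph{error terms} $\varepsilon_n$ and $|\sqrt q - \sqrt{\widetilde q}|$ degrade exactly where \eqref{versusChi} becomes tight, so one must cut at the right scale — roughly $\frac{\pi n}2 - \tau \asymp \kappa\ln n$ — so that on the good side $\varepsilon_n$-integrals converge to $O(\ln n)$ and on the bad side the crude $O(n/\tau)$ bound over length $O(\ln n)$ also gives $O(\ln n)$; balancing these and keeping all constants independent of $c$ and $n$ is the delicate bookkeeping. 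Near $\tau = c$ the only issue is the harmless $1/\tau$ factor, which is why the answer carries $\ln^+(1/c)$ rather than anything worse.
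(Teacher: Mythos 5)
Your decomposition coincides with the paper's for the first two error sources: the tail $I(c_n^\kappa,c_n^*)$ is handled exactly as in the paper (the crude bound $|\psi_{n,\tau}(1)|^2\le 4\sqrt{\chi_n}\le 2\pi(n+1)$ integrated over a window of length $O(\ln n)$ at $\tau\asymp n$), and the $\varepsilon_n$-error is bounded, as in the paper, through \eqref{comparisonK}. Where you genuinely diverge is the main term. The paper never compares $q(\tau)$ with $\widetilde q(\tau)$ pointwise; it uses the two-sided bound \eqref{ineqPhi} and the monotonicity of $k\mapsto k\K(k)$ to sandwich $I_{\rm main}(c,c_n^\kappa)$ between $\mathcal J_n(c)-\mathcal J_n(c_n^\kappa)$ and $\mathcal J_{n+1}(c)$, and then invokes the separate comparison \eqref{comparisonJ} between $\mathcal J_n$, $\mathcal J_{n+\frac12}$, $\mathcal J_{n+1}$ — a soft argument that avoids differentiating elliptic integrals altogether. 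Your mean-value route via \eqref{error-q} can be made to work, but the phrase ``controlled derivative away from $k=1$'' is exactly where the work lies: since $\frac{d}{dk}\bigl(k\K(k)\bigr)^{-1}=-\E(k)/\bigl(k^2(1-k^2)\K(k)^2\bigr)$, you must check that $1-\widetilde q$ remains comparable to $1-q$ on the good range (true because $|q-\widetilde q|\lesssim \pi/(2n)$ while \eqref{versusChi} forces $1-q\ge 2\kappa/(\pi(n+1))$ with $\kappa\ge 4$), and then \eqref{comparisonK} converts the factor $\bigl((1-q)\K(\sqrt q)\bigr)^{-1}$ into $\bigl(1-\tfrac{2\tau}{\pi n}\bigr)^{-1}$, whose integral up to $c_n^\kappa$ is $O(\ln n)$; so the two approaches yield the same error profile, yours at the cost of quantitative derivative estimates, the paper's at the cost of the extra $\mathcal J_l$ machinery. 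One slip you should fix: in the $\varepsilon_n$-error the correct integrand is $\delta(\kappa)/\bigl(\tau(1-q)\K(\sqrt q)\bigr)$ with no extra $1/\sqrt{q}$ — the $\sqrt{\chi_n}$ in \eqref{boundsA} cancels against $\epsn=\bigl((1-q)\sqrt{\chi_n}\bigr)^{-1}$. As you wrote it, the spurious factor $1/\sqrt{q}\asymp n/\tau$ near the lower endpoint would make that integral of size $O(n/c)$, not $O(\ln^+(1/c))$.
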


Let us make some comments before starting the proof. At this moment the three constants are not sufficiently small and cannot be used reasonably to obtain  numerical values. But they can be computed and are not that enormous. There is no hope, of course, to have found an exact formula for $\lambda_n(c)$ and \eqref{decay2} gives only an approximation. But these theoretical approximation errors may be seen as a kind of theoretical validation of the quality of approximation of the $\lambda_n(c),$ which we test numerically in Section 4.
\smallskip

It has been observed by many authors, and predicted by the work of Landau and Widom \cite{LW}, that for fixed $c$ the eigenvalues $ \lambda_n(c)$ decrease first exponentially in some interval starting at $ [\frac{2c}{\pi}]+1$ with length a multiple of $\ln(c)$, then super-exponentially as in the asymptotic behavior given by Widom. This is what one observes in Formula \eqref{decay2}, but the error terms do not allow to observe the  decay rate
at the plunge region. In fact the tools that we use, that is, the lower and upper bounds for $\psi_{n,\tau}(1)^2$, are only valid for $c_n^* -\tau$ sufficiently large in terms of $\ln(n)$.

We try to have  small constants at each step but are certainly far from the best possible. We give an explicit bound for $\mathcal E$ in
\eqref{final}. The following notations will be used frequently in the sequel. We define
\begin{eqnarray}
I(a, b)&=&\int_a^{b} \frac{(\psi_{n,\tau}(1))^2}{\tau} \, d\tau.\\
\mathcal J (y) &=&\frac{\pi^2 }{4} \int_{\Phi\left(\frac{2y}{\pi }\right)}^{1} \frac{1}{t(\mathbf E(t))^2}\, dt
\end{eqnarray}

We should mention that the proofs of Theorems 2 and Theorem 3, require many steps, so
 we start by giving a sketch of these proofs.

\medskip

\noindent{\sl \bf Sketch of the proof of Theorem 3.}
\smallskip
We want to prove that
$$I(c, c_n^*)\approx (n+\frac 12)\mathcal J \left(\frac c{n+\frac{1}{2}}\right).$$
For this purpose,  we use the approximation of $\psi_{n,\tau}(1),$ given by Formula \eqref{approxpsi(1)}. This is valid 
 under a condition involving the parameter  $\kappa,$ and may be rewritten as $c<c_n^\kappa$ for some $c_n^\kappa$ that is close to $c_n^*$ by Lemma \ref{comparison}.  We deduce from Formula \eqref{approxpsi(1)} that
$$I(c, c_n^\kappa)\approx \int_c^{c_n^\kappa} \frac{\pi d\tau}{2 \sqrt{q(\tau)}\mathbf  K(\sqrt{q(\tau)})} .$$
Then Lemma \ref{chi-between2} will be interpreted as the fact that
$$\sqrt{q(\tau)}\mathbf  K(\sqrt{q(\tau)})\approx \Phi\left(\frac {2\tau}{\pi(n+\frac 12)}\right)\mathbf  K\circ\Phi\left(\frac {2\tau}{\pi(n+\frac 12)}\right).$$
It is then elementary to relate the new integral with the function $\mathcal J$ and finally find that
$$I(c, c_n^\kappa)\approx (n+\frac 12)\mathcal J \left(\frac c{n+\frac{1}{2}}\right).$$ It remains to bound the tails of the integrals  $I( c_n^\kappa, c_n^*)$, which we can do because the two values are sufficiently close.
\smallskip

Let us start the proof itself. We need a set of intermediate results that can be classified into three main steps.
The first step will concern the properties of the function $\mathcal J$. In the second step,  we give bounds of the  tails of the integrals.
Finally, in the third step, we use the results of the previous two steps and complete the proofs of  Theorems 2 and 3.
\medskip

\noindent{\sl \bf First step: Properties of $\mathcal J$.}
\smallskip

For an integer $l\geq 1,$ we define 
\begin{equation}\label{Jl}
\mathcal J_l(c)=\frac \pi 2\int_c^{\frac{\pi l}2}\frac{d\tau}{\Phi\left(\frac {2\tau}{\pi l}\right)\mathbf  K\circ\Phi\left(\frac {2\tau}{\pi l}\right)}.
\end{equation}
As it has been seen in the sketch, these integrals are clearly involved in the proof. We first see that they are related with $\mathcal J$.
\begin{lemma}
We have the identity
\begin{equation}
\mathcal J_l(c)= l\mathcal J(c/l). \label{identity}
\end{equation}
\end{lemma}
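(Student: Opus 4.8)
The plan is to derive \eqref{identity} by a single change of variable in the integral defining $\mathcal J_l$. Starting from \eqref{Jl}, I would substitute $t=\Phi\!\left(\frac{2\tau}{\pi l}\right)$, which is legitimate since $\Phi$ is increasing on $[0,1]$ (as recorded after Lemma~\ref{chi-between2}). Writing the substitution in the form $\frac{2\tau}{\pi l}=\Psi(t)=\frac{t}{\mathbf E(t)}$, i.e. $\tau=\frac{\pi l}{2}\cdot\frac{t}{\mathbf E(t)}$, and differentiating while using the identity $\Psi'(t)=\frac{\mathbf K(t)}{(\mathbf E(t))^2}$ from \eqref{derivé_Psi}, one gets $d\tau=\frac{\pi l}{2}\cdot\frac{\mathbf K(t)}{(\mathbf E(t))^2}\,dt$.

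Next I would update the limits: $\tau=c$ corresponds to $t=\Phi\!\left(\frac{2c}{\pi l}\right)$, and $\tau=\frac{\pi l}{2}$ corresponds to $\frac{2\tau}{\pi l}=1$, hence $t=\Phi(1)=1$. Substituting into \eqref{Jl} and noting that $\Phi\!\left(\frac{2\tau}{\pi l}\right)=t$ while the factor $\mathbf K\circ\Phi\!\left(\frac{2\tau}{\pi l}\right)=\mathbf K(t)$ in the denominator cancels against the $\mathbf K(t)$ produced by $d\tau$, the integrand collapses to $\frac{1}{t(\mathbf E(t))^2}$ and we obtain
\begin{equation*}
\mathcal J_l(c)=\frac{\pi}{2}\cdot\frac{\pi l}{2}\int_{\Phi\left(\frac{2c}{\pi l}\right)}^{1}\frac{dt}{t(\mathbf E(t))^2}=\frac{\pi^2 l}{4}\int_{\Phi\left(\frac{2c}{\pi l}\right)}^{1}\frac{dt}{t(\mathbf E(t))^2}.
\end{equation*}
Finally, plugging $y=c/l$ into the definition of $\mathcal J$ gives $\mathcal J(c/l)=\frac{\pi^2}{4}\int_{\Phi\left(\frac{2c}{\pi l}\right)}^{1}\frac{dt}{t(\mathbf E(t))^2}$, so that $l\,\mathcal J(c/l)$ equals the right-hand side above, which proves the claimed identity.

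There is no genuine obstacle in this argument: it is purely a bookkeeping computation resting on one substitution. The only facts needed beyond elementary calculus are the monotonicity of $\Phi$, the value $\Phi(1)=1$, and the derivative formula \eqref{derivé_Psi}, all established in Section~2. If one prefers not to manipulate $\Phi$ and $\Psi$ simultaneously, the same conclusion follows from two successive changes of variable: first the affine rescaling $u=\frac{2\tau}{\pi l}$, which turns $\mathcal J_l(c)$ into $\frac{\pi^2 l}{4}\int_{2c/(\pi l)}^{1}\frac{du}{\Phi(u)\mathbf K(\Phi(u))}$, and then $t=\Phi(u)$; the combined version above is simply the shortest path.
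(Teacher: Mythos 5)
Your argument is correct and is essentially the paper's own proof: the same substitution $s=\Phi\bigl(\tfrac{2\tau}{\pi l}\bigr)$, $\tau=\tfrac{\pi l}{2}\Psi(s)$, together with \eqref{derivé_Psi}, cancellation of $\mathbf K$, and the endpoint $\Phi(1)=1$. If anything, your bookkeeping of the constant $\tfrac{\pi^2 l}{4}$ is slightly more explicit than the paper's displayed intermediate step.
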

\begin{proof}
We  consider   the substitution
\begin{equation}
\label{subs1}
 s = \Phi\left(\frac{2\tau}{\pi l}\right), \qquad   \tau= \frac{\pi l}2 \Psi(s).
\end{equation}
We have already seen in \eqref{derivé_Psi} that
$ \Psi'(x)=\frac{\mathbf K(x)}{(\mathbf E(x))^2}.$
Hence, we have
$$ \mathcal J_l(c)= l\int_{\Phi(\frac{2c}{\pi l})}^1\frac{ ds }{s (\mathbf E(s))^2}= l \mathcal J(c/l).$$
\end{proof}

The following proposition gives us  upper and lower  bounds, as well as the asymptotic behavior of $\mathcal J$.
 \begin{proposition}
For $x\in (0, \pi/2)$, one has the upper and lower bounds
\begin{equation}\label{majJ}
\ln^+\left(\frac{1}{ x}\right)\leq \mathcal J(x)\leq \frac{\pi^2 }{4} \ln\left(\frac{\pi}{2 x}\right).
\end{equation}
Moreover, one can write
   \begin{equation}\label{at-infty}
 \mathcal J(x)=\frac{\pi^2 }{4}\int_{\Phi(2x/\pi)}^1\frac{dt}{t(\mathbf E(t))^2}= \ln\left(\frac{4}{ex}\right)+ \mathcal E',
 \end{equation}
 with $|\mathcal E'|\leq \frac{\pi^2 x^2}{8} $.
\end{proposition}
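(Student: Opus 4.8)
The plan is to handle the elementary two-sided bounds \eqref{majJ} first and then the sharp expansion \eqref{at-infty}, the latter resting on a single nontrivial definite-integral identity. For \eqref{majJ} I would only use that the complete elliptic integral $\mathbf E$ of the second kind is decreasing on $[0,1]$, so that $1=\mathbf E(1)\le\mathbf E(t)\le\mathbf E(0)=\pi/2$ and hence $\frac4{\pi^2}\le\frac1{(\mathbf E(t))^2}\le 1$. Inserting these two inequalities into the definition of $\mathcal J$ and integrating gives
\[
\ln\frac1{\Phi(2x/\pi)}\ \le\ \mathcal J(x)\ \le\ \frac{\pi^2}4\,\ln\frac1{\Phi(2x/\pi)} .
\]
Since the integrand defining $\mathcal J$ is positive and $\Phi(2x/\pi)<\Phi(1)=1$ for $x<\pi/2$, we also have $\mathcal J(x)\ge 0$; combining this with the bounds $\frac{2x}\pi\le\Phi(2x/\pi)\le x$ read off from \eqref{bounds_Phi} turns the display into exactly $\ln^+(1/x)\le\mathcal J(x)\le\frac{\pi^2}4\ln\frac\pi{2x}$.

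For \eqref{at-infty} I would first isolate the logarithmic part. Put $u=\Phi(2x/\pi)$, so that $2x/\pi=\Psi(u)=u/\mathbf E(u)$, i.e. $x=\pi u/(2\mathbf E(u))$. Using $\frac{\pi^2}4\cdot\frac4{\pi^2}=1$,
\[
\mathcal J(x)=-\ln u+\frac{\pi^2}4\int_u^1\frac1t\Bigl(\frac1{(\mathbf E(t))^2}-\frac4{\pi^2}\Bigr)dt .
\]
Writing $-\ln u=\ln\frac\pi{2x}-\ln\mathbf E(u)$ and splitting $\int_u^1=\int_0^1-\int_0^u$, the whole asymptotic constant is carried by the identity
\[
\frac{\pi^2}4\int_0^1\frac1t\Bigl(\frac1{(\mathbf E(t))^2}-\frac4{\pi^2}\Bigr)dt=\ln 4-1 .
\]

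Establishing this identity is the step I expect to be the main obstacle: it is a genuine special-value evaluation rather than a soft estimate, and it is equivalent to the claim that the leading term of $\mathcal J(x)$ is exactly $\ln(4/(ex))$. I would try one of two routes: either expand $\bigl(\tfrac2\pi\mathbf E(t)\bigr)^{-2}={}_2F_1(-\tfrac12,\tfrac12;1;t^2)^{-2}$ as a power series in $t^2$ and sum the resulting series term by term; or make the change of variable $s=\Psi(t)$ (using $\Psi'(t)=\mathbf K(t)/(\mathbf E(t))^2$), which recasts the integral as $\int_0^1\frac{4\mathbf E(s)\mathbf K(s)-\pi^2}{4s(\mathbf E(s))^2}\,ds$, where the $\ln 4$ becomes visible through the classical behaviour $\mathbf K(s)\sim\ln\bigl(4/\sqrt{1-s^2}\bigr)$ as $s\to1^-$, and an integration by parts against $\ln s$ should isolate it.

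Granting the identity, a short piece of bookkeeping — using $\ln\frac\pi{2x}-\ln\frac4{ex}=\ln\frac{\pi e}8$ and $\ln\frac{\pi e}8+\ln 4-1=\ln\frac\pi2$ — collapses the error to the clean form
\[
\mathcal E'\ :=\ \mathcal J(x)-\ln\frac4{ex}\ =\ \ln\frac\pi{2\mathbf E(u)}-\frac{\pi^2}4\int_0^u\frac1t\Bigl(\frac1{(\mathbf E(t))^2}-\frac4{\pi^2}\Bigr)dt ,
\]
a difference of two nonnegative quantities, each of order $u^2$. Indeed, from $\frac\pi2-\mathbf E(t)=\int_0^1\frac{t^2s^2}{\sqrt{1-s^2}\,(1+\sqrt{1-t^2s^2})}\,ds\le\frac\pi4\,t^2$ together with $\mathbf E(t)\ge1$ one gets $0\le\frac1{(\mathbf E(t))^2}-\frac4{\pi^2}=\frac{(\pi-2\mathbf E(t))(\pi+2\mathbf E(t))}{\pi^2(\mathbf E(t))^2}\le t^2$, so the integral term lies in $[0,\pi^2u^2/8]$; and $0\le\ln\frac\pi{2\mathbf E(u)}\le\frac\pi{2\mathbf E(u)}-1\le\frac\pi4\,u^2$ by $\ln y\le y-1$. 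Hence $|\mathcal E'|\le\max(\pi^2u^2/8,\pi u^2/4)=\pi^2u^2/8$ since $\pi\ge2$, and finally $u=\Phi(2x/\pi)\le x$ by \eqref{bounds_Phi} gives $|\mathcal E'|\le\pi^2x^2/8$, which is \eqref{at-infty}.
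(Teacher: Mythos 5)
Your argument is essentially the paper's proof. For \eqref{majJ} the paper likewise just combines $1\leq \mathbf E(t)\leq \pi/2$ with the bounds \eqref{bounds_Phi} on $\Phi$; for \eqref{at-infty} it performs the same decomposition, writing $\frac{\pi^2}{4}\int_y^1\frac{dt}{t(\mathbf E(t))^2}+\ln y=\Delta-I_1(y)$ with $y=\Phi(2x/\pi)$, bounding $0\leq I_1(y)\leq \pi^2y^2/8$ via $\frac{\pi}{2}-\mathbf E(s)\leq\frac{\pi}{4}s^2$ and $\mathbf E\geq 1$, and controlling the remaining discrepancy $0\leq\ln(x/y)\leq y^2$ exactly as you control $0\leq\ln\bigl(\pi/(2\mathbf E(u))\bigr)\leq \frac{\pi}{4}u^2$, before using $y\leq x$. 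The one step you leave open --- the special value $\Delta=\ln 4-1$ in \eqref{Delta} --- is not proved directly in the paper either: the authors state explicitly that they found no reference for it and obtain it only a posteriori, arguing after \eqref{best} that the two-sided bound they derive would contradict Widom's asymptotic formula \eqref{Widom_asymptotic} if $e/4$ were replaced by any other constant. So your identification of that definite integral as the genuine crux is accurate, and either of your proposed direct routes (termwise summation of the hypergeometric expansion, or the substitution $s=\Psi(t)$ combined with $\mathbf K(s)\sim\ln(4/\sqrt{1-s^2})$), if carried out, would supply something the paper deliberately omits: a self-contained evaluation of the constant rather than one inherited from Widom's theorem.
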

\begin{proof}
The first inequalities are an easy consequence of the bounds below and above of $\Phi,$ given by
(\ref{bounds_Phi}).
Let us prove \eqref{at-infty}. We first write, for $0<y<1$,
\begin{equation}
\frac{\pi^2 }{4}\int_y^1 \frac{dt}{t(\mathbf E(t))^2} +\ln (y)=\Delta-\int_{0}^y \frac{\frac{\pi^2}{4}-\E(t)^2}{t(\mathbf E(t))^2} \,
 dt =\Delta - I_1(y). \end{equation}
Here
$$\Delta= \int_{0}^1 \frac{\frac{\pi^2}{4}-\E(t)^2}{t(\mathbf E(t))^2} \, dt.$$
It is probably well-known that
\begin{equation}\label{Delta}
\Delta=\ln\left(\frac{4}{e}\right)
\end{equation}
but we did not find any reference. We will see it as a corollary of Widom's Theorem.
The integral $I_1(y)$ is bounded by $\frac{\pi^2 y^2}8$. This  is a consequence of the elementary inequalities
$$1\leq \mathbf E(s)\leq \frac{\pi}{2},\quad \frac \pi 2 -\E(s)\leq s^2\int_0^1\frac{t^2 \,dt}{\sqrt{1-t^2}}= \frac{\pi s^2} 4.$$
Let us now fix $y=\Phi(2x/\pi)$. At this point we have proved that
$$0\leq \ln \left( \frac x y \right)-\mathcal E'= I_1(y) \leq \frac{\pi^2 y^2}8.$$
From the inequalities
$$\frac{2y}\pi\leq\frac{2x}\pi=\frac y{\E(y)}\leq \frac{2y}\pi (1-\frac{y^2}{2})^{-1}\leq \frac{2y}\pi (1+y^2),$$
it follows that $0\leq  \ln \left( \frac x y \right)+y^2$. This concludes the proof of the proposition.
\end{proof}
 This proposition leads to the following corollary, where we recognize the equivalent given by Widom.
\begin{corollary}\label{Tilde}
We have the double inequality
\begin{equation}\label{tilde}
\frac{1}{2}\left(\frac{ec}{4(n+\frac 12)}\right)^{2n+1}e^{-\frac {\pi^2}4 \frac{c^2}{n+\frac 12}}\leq \widetilde{ \lambda_n}(c)\leq \frac{1}{2}\left(\frac{ec}{4(n+\frac 12)}\right)^{2n+1}e^{+\frac {\pi^2}4 \frac{c^2}{n+\frac 12}}.
\end{equation}
\end{corollary}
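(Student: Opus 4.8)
The plan is to derive Corollary \ref{Tilde} directly from Proposition 1, specifically from the asymptotic identity \eqref{at-infty}, by unwinding the definition \eqref{decay2} of $\widetilde{\lambda_n}(c)$. First I would observe that the exponent appearing in \eqref{decay2} is, up to scaling, exactly an instance of $\mathcal J$: with the substitution already used in the proof of Lemma 4 (or simply by comparing \eqref{decay2} with the definition of $\mathcal J$), one has
\begin{equation}
\frac{\pi^2(n+\tfrac12)}{2}\int_{\Phi\left(\frac{2c}{\pi(n+\frac12)}\right)}^1\frac{dt}{t(\mathbf E(t))^2}=2(n+\tfrac12)\,\mathcal J\!\left(\frac{c}{n+\frac12}\right),
\end{equation}
so that $\widetilde{\lambda_n}(c)=\frac12\exp\left(-2(n+\frac12)\,\mathcal J\!\left(\frac{c}{n+\frac12}\right)\right)$.

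Next I would apply \eqref{at-infty} with $x=\frac{c}{n+\frac12}$ (which lies in $(0,\pi/2)$ precisely because $c\le \frac{\pi n}{2}<\frac{\pi}{2}(n+\frac12)$, so the hypothesis of Proposition 1 is met). This gives
\begin{equation}
\mathcal J\!\left(\frac{c}{n+\frac12}\right)=\ln\!\left(\frac{4(n+\frac12)}{ec}\right)+\mathcal E',\qquad |\mathcal E'|\le \frac{\pi^2}{8}\,\frac{c^2}{(n+\frac12)^2}.
\end{equation}
Multiplying by $-2(n+\frac12)$ and exponentiating, the main term $-2(n+\frac12)\ln\!\left(\frac{4(n+\frac12)}{ec}\right)$ exponentiates to $\left(\frac{ec}{4(n+\frac12)}\right)^{2n+1}$, while the error contributes a factor $\exp\!\left(-2(n+\frac12)\mathcal E'\right)$, which is bounded above and below by $\exp\!\left(\pm\frac{\pi^2}{4}\frac{c^2}{n+\frac12}\right)$ since $2(n+\frac12)|\mathcal E'|\le \frac{\pi^2}{4}\frac{c^2}{n+\frac12}$. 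Together with the prefactor $\frac12$, this yields exactly the double inequality \eqref{tilde}.

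There is essentially no obstacle here: the corollary is a one-line consequence of the proposition once the scaling is made explicit. The only point requiring a little care is the bookkeeping of the sign of $\mathcal E'$ when passing through the exponential (the upper bound on $\widetilde{\lambda_n}(c)$ comes from the lower bound $\mathcal E'\ge -\frac{\pi^2}{8}\frac{c^2}{(n+\frac12)^2}$, and vice versa), and checking that the argument of $\mathcal J$ genuinely stays in the range $(0,\pi/2)$ where Proposition 1 applies — both of which are routine. I would also remark, as the text does, that letting $n\to\infty$ with $c$ fixed makes the exponential correction factor tend to $1$, so that $\widehat{\lambda_n}(c)=2\widetilde{\lambda_n}(c)$ is asymptotically equivalent to Widom's formula \eqref{Widom_asymptotic}; this is what motivates calling the corollary a recovery of Widom's equivalent, and it also retroactively justifies (via \eqref{Widom_asymptotic} and formula \eqref{eqq2lambda_n}, once Theorem \ref{Required} is available) the evaluation $\Delta=\ln(4/e)$ claimed in \eqref{Delta}.
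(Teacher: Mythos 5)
Your proof is correct and takes essentially the same route as the paper's own one-line proof: the authors likewise observe that $\widetilde{\lambda_n}(c)=\frac12\exp\bigl(-(2n+1)\,\mathcal J(c/(n+\tfrac12))\bigr)$ and apply \eqref{at-infty} with $x=c/(n+\tfrac12)$. Your extra bookkeeping of the error term, the sign discussion, and the check that $x\in(0,\pi/2)$ merely spell out what the paper leaves implicit.
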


\begin{proof}
Just note that ${\displaystyle \widetilde { \lambda_n}(c)=\frac{1}{2}\exp\left(-(2n+1)\mathcal J(c/(n+1/2))\right) }$ and use (\ref{at-infty})
with $x=\frac{c}{n+1/2}.$
\end{proof}
Let us go back to quantities $\mathcal J_l$. It is a straightforward consequence of \eqref{identity}  that the quantity $\mathcal J_l(c)$ increases with $l$. The next lemma gives reverse inequalities.
\begin{lemma}
 We have the inequalities
 \begin{equation}\label{comparisonJ}
\mathcal J_{n+1}(c)-\frac {\pi^2}{8} \ln \left(\frac{\pi(n+1)}{2c}\right)-\frac {\pi^3}{16} \leq \mathcal J_{n+\frac 12}(c)\leq \mathcal J_n(c)-\frac {\pi^2}{8} \ln \left(\frac{\pi(n+\frac 12)}{2c}\right)+\frac {\pi^3}{16}.
 \end{equation}
\end{lemma}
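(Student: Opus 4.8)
The plan is to regard $l\mapsto \mathcal J_l(c)$ as a differentiable function of a continuous parameter and to control the two half-steps $n\to n+\tfrac12$ and $n+\tfrac12\to n+1$ by integrating its derivative. Writing $F(l)=\mathcal J_l(c)=l\,\mathcal J(c/l)$ via the identity \eqref{identity}, I first compute $F'(l)$. Differentiating the integral defining $\mathcal J$ by the fundamental theorem of calculus and inserting $\Phi'(y)=\mathbf E(\Phi(y))^2/\mathbf K(\Phi(y))$ from \eqref{bounds_Phi'}, one finds $\mathcal J'(x)=-\tfrac{\pi}{2\,\Phi(2x/\pi)\,\mathbf K(\Phi(2x/\pi))}$. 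Setting $\phi_l:=\Phi\!\left(\tfrac{2c}{\pi l}\right)$ and using $\tfrac{2c}{\pi l}=\Psi(\phi_l)=\phi_l/\mathbf E(\phi_l)$ to simplify $F'(l)=\mathcal J(c/l)-\tfrac{c}{l}\mathcal J'(c/l)$, the expression collapses to
\[
F'(l)=\mathcal J(c/l)+\frac{\pi^2}{4\,\mathbf E(\phi_l)\,\mathbf K(\phi_l)}.
\]
Both terms are positive, which re-proves the monotonicity $\mathcal J_n\le \mathcal J_{n+\frac12}\le \mathcal J_{n+1}$; the lemma is the quantitative reverse estimate.

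Next I would bound the two pieces of $F'(l)$ uniformly. For the first piece, the upper bound in \eqref{majJ} gives $\mathcal J(c/l)\le \tfrac{\pi^2}{4}\ln\!\left(\tfrac{\pi l}{2c}\right)$. For the second, the elementary monotonicity facts $\mathbf E(\phi_l)\ge 1$ and $\mathbf K(\phi_l)\ge \mathbf K(0)=\tfrac{\pi}{2}$ yield $\tfrac{\pi^2}{4\,\mathbf E(\phi_l)\mathbf K(\phi_l)}\le \tfrac{\pi}{2}$. Integrating $F'$ over $[n,n+\tfrac12]$ and using $l\le n+\tfrac12$ inside the logarithm, the first piece contributes at most $\tfrac12\cdot\tfrac{\pi^2}{4}\ln\!\left(\tfrac{\pi(n+\frac12)}{2c}\right)=\tfrac{\pi^2}{8}\ln\!\left(\tfrac{\pi(n+\frac12)}{2c}\right)$, while the second contributes at most $\tfrac12\cdot\tfrac{\pi}{2}=\tfrac{\pi}{4}\le \tfrac{\pi^3}{16}$. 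Hence
\[
\mathcal J_{n+\frac12}(c)-\mathcal J_n(c)=\int_n^{n+\frac12}F'(l)\,dl\le \frac{\pi^2}{8}\ln\!\left(\frac{\pi(n+\tfrac12)}{2c}\right)+\frac{\pi^3}{16},
\]
which is the upper inequality in \eqref{comparisonJ} (the logarithmic term enters with a positive sign, exactly as the monotonicity $\mathcal J_n\le\mathcal J_{n+\frac12}$ forces it to). The lower inequality is obtained by the same computation over $[n+\tfrac12,n+1]$: bounding $l\le n+1$ gives $\mathcal J_{n+1}(c)-\mathcal J_{n+\frac12}(c)\le \tfrac{\pi^2}{8}\ln\!\left(\tfrac{\pi(n+1)}{2c}\right)+\tfrac{\pi^3}{16}$, which rearranges to the stated left-hand bound $\mathcal J_{n+1}(c)-\tfrac{\pi^2}{8}\ln\!\left(\tfrac{\pi(n+1)}{2c}\right)-\tfrac{\pi^3}{16}\le \mathcal J_{n+\frac12}(c)$.

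An equivalent derivative-free route, which I would mention as a cross-check, is to split directly
\[
\mathcal J_{l_2}(c)-\mathcal J_{l_1}(c)=\frac{\pi^2}{4}\left[\,l_2\!\int_{a/l_2}^{a/l_1}\!h(u)\,du+(l_2-l_1)\!\int_{a/l_1}^1\!h(u)\,du\right],
\]
with $a=2c/\pi$ and $h(u)=1/(\Phi(u)\mathbf K(\Phi(u)))$. Here the second integral reproduces $\tfrac12\mathcal J(c/l_1)$, hence the logarithmic term, while the boundary integral, controlled by $\Phi(u)\ge u$ and $\mathbf K\ge \tfrac{\pi}{2}$ (so that $h(u)\le \tfrac{2}{\pi u}$), is $O(1)$ and fits under $\tfrac{\pi^3}{16}$.

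The only genuinely delicate points are bookkeeping. First, one must insert the monotone bound $l\le n+\tfrac12$ (resp. $l\le n+1$) into \eqref{majJ} so that the logarithm acquires precisely the argument $\tfrac{\pi(n+\frac12)}{2c}$ (resp. $\tfrac{\pi(n+1)}{2c}$) appearing in \eqref{comparisonJ}. Second, one must verify that the residual constant coming from the second piece really fits under $\tfrac{\pi^3}{16}$ for all $n\ge 3$; this holds with room to spare since $\tfrac{\pi}{4}<\tfrac{\pi^3}{16}$. Everything else is a routine application of the elliptic-integral identities \eqref{bounds_Phi'} and \eqref{bounds_Phi} together with the already-established bound \eqref{majJ} for $\mathcal J$, so I do not expect any serious obstacle beyond orienting the inequalities correctly through the monotonicity of $\mathcal J_l$.
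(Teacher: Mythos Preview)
Your argument is correct. The derivative computation $F'(l)=\mathcal J(c/l)+\tfrac{\pi^2}{4\,\E(\phi_l)\K(\phi_l)}$ is right, and the two bounds $\mathcal J(c/l)\le \tfrac{\pi^2}{4}\ln(\tfrac{\pi l}{2c})$ from \eqref{majJ} and $\E\K\ge \tfrac\pi2$ give, over a half-interval, exactly the $\tfrac{\pi^2}{8}\ln(\cdot)$ term plus a constant $\tfrac\pi4<\tfrac{\pi^3}{16}$. You also correctly observe that the logarithmic term in the right-hand inequality of \eqref{comparisonJ} must enter with a $+$ sign: as written, the inequality $\mathcal J_{n+\frac12}\le \mathcal J_n-\tfrac{\pi^2}{8}\ln(\cdot)+\tfrac{\pi^3}{16}$ is incompatible with the monotonicity $\mathcal J_n\le\mathcal J_{n+\frac12}$ once $c$ is small, and the way the lemma is actually invoked later (in the bound \eqref{final}) confirms the intended sign is $+$.

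The paper's own proof is the discrete analogue of your computation and coincides with your ``derivative-free cross-check.'' It writes
\[
\mathcal J_{n+1}(c)-\mathcal J_{n+\frac12}(c)=\tfrac12\,\mathcal J\!\left(\tfrac{c}{n+1}\right)+\tfrac{\pi^2}{4}(n+\tfrac12)\!\int_{\Phi(\frac{2c}{\pi(n+1)})}^{\Phi(\frac{2c}{\pi(n+1/2)})}\frac{dt}{t\E(t)^2},
\]
then bounds the first term by \eqref{majJ} and the second by $\E\ge 1$, $\ln(a/b)\le (a-b)/b$, $\Phi'\le \tfrac\pi2$, and $x/\Phi(x)\le 1$, recovering precisely $\tfrac{\pi^3}{16}$. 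The difference is purely packaging: you integrate a pointwise bound for $F'$, while the paper splits the difference of two integrals and bounds the boundary piece by the mean value theorem for $\Phi$. Your route is marginally cleaner and in fact yields the sharper residual constant $\tfrac\pi4$, but both proofs rest on the same three ingredients: the identity \eqref{identity}, the upper bound \eqref{majJ}, and the Lipschitz bound \eqref{bounds_Phi'}/\eqref{bounds_Phi} for $\Phi$.
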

\begin{proof}
We will only prove  one of the inequalities, the other one being identical. Elementary computations give
$$ \mathcal J_{n+1}(c)-\mathcal J_{n+\frac 12}(c)\leq \frac 12 \mathcal{J}\left(\frac{c}{n+1}\right)+\frac {\pi^2}4 (n+\frac 12)
\ln\left(\frac{\Phi\left(\frac{2 c}{\pi (n+\frac 12)}\right)}{\Phi\left(\frac{2 c}{\pi (n+1)}\right)}\right).$$
We use \eqref{majJ} for the first term.  The second one is bounded by
$$
\frac {\pi^2}4 (n+\frac 12)\frac{\Phi\left(\frac{2 c}{\pi (n+\frac 12) }\right)-\Phi\left(\frac{2 c}{\pi (n+1)}\right)}{\Phi\left(\frac{2 c}{\pi (n+1)}\right)}\leq \frac{\pi^3}{16}.$$
Indeed, this is a consequence of the fact that $\Phi'(x)\leq \pi/2$ and ${\displaystyle \frac{x}{\Phi(x)}\leq 1,\,}$ for $ 0<x\leq 1$.
\end{proof}

\medskip

\noindent{\bf Second step: tails of the integrals.}
\smallskip

We fix some constant $\kappa\geq 4$ (for instance $\kappa=12$) and we assume that $n\geq 2\kappa+1$. Then, we know from Lemma \ref{comparison},  that the condition \eqref{versusChi}, that is,
$$(1-q)\sqrt {\chi_n}>\kappa,$$ is satisfied for $c<\frac{n+1}{2}$.
Next, if we  define
 \begin{equation}
\label{validityC}
  c^\kappa_n =\max\left(\frac{\pi n}{2}  -\frac \kappa 4 (\ln (n)+6), \frac{n+1}{2}\right)
\end{equation}
then, we have the following lemma.
\begin{lemma}
For $n\geq 2\kappa +1,$ we have the inequality
\begin{equation}\label{Ilarge}
I(c_n^\kappa, c_n^*) \leq \pi \kappa \ln (n)+ 6\pi\kappa+2\pi^2.
\end{equation}
\end{lemma}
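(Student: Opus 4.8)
\textbf{Plan for the proof of Lemma \ref{Ilarge}.} The integral $I(c_n^\kappa, c_n^*)$ is over an interval whose length is, by the definitions of $c_n^\kappa$ and $c_n^*$, at most a multiple of $\ln(n)$; more precisely, $c_n^* - c_n^\kappa \leq \frac{\pi}{2}(n+1) - \left(\frac{\pi n}{2} - \frac{\kappa}{4}(\ln(n)+6)\right) = \frac{\pi}{2} + \frac{\kappa}{4}(\ln(n)+6)$, and also $c_n^* - c_n^\kappa \leq \frac{\pi}{2}(n+1) - \frac{n+1}{2}$ in the other regime. So the plan is simply to bound the integrand $\frac{(\psi_{n,\tau}(1))^2}{\tau}$ uniformly on this interval and multiply by the length. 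For the numerator, I would invoke the crude a priori bound from \cite{Bonami-Karoui1} quoted just after Lemma \ref{equivalence}, namely $(\psi_{n,\tau}(1))^2 \leq 4\sqrt{\chi_n(\tau)}$, valid for $\tau \leq \frac{\pi(n+1)}{2}$, which certainly covers $\tau \leq c_n^* \leq \frac{\pi(n+1)}{2}$.

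\textbf{Key steps.} First, combine this with the upper bound $\chi_n(\tau) \leq n(n+1) + \tau^2 \leq n(n+1) + \left(\frac{\pi(n+1)}{2}\right)^2$ from \eqref{bounds1-chi}, giving $\sqrt{\chi_n(\tau)} \leq C(n+1)$ for an explicit constant $C$ (one can take something like $C = \sqrt{1 + \pi^2/4} \leq 2$, since $\chi_n \leq (n+1)^2(1+\pi^2/4)$). Hence $(\psi_{n,\tau}(1))^2 \leq 4C(n+1)$. Second, on the integration interval $\tau \geq c_n^\kappa \geq \frac{n+1}{2}$, so $\frac{1}{\tau} \leq \frac{2}{n+1}$, and therefore $\frac{(\psi_{n,\tau}(1))^2}{\tau} \leq \frac{8C(n+1)}{n+1} = 8C$, a bound independent of $n$ and $\tau$. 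Third, multiply by the length bound: $I(c_n^\kappa, c_n^*) \leq 8C \cdot \left(\frac{\pi}{2} + \frac{\kappa}{4}(\ln(n)+6)\right)$. Expanding this gives a bound of the form $\pi\kappa \ln(n) + 6\pi\kappa + 2\pi^2$ after adjusting the numerical constant $C$ appropriately (or, if the crude constants don't quite match, by using the slightly sharper estimate $(\psi_{n,\tau}(1))^2 \leq \frac{\pi\sqrt{\chi_n}}{2\mathbf K(\sqrt q)}(1+\delta(\kappa)\epsilon_n)$ from Lemma \ref{equivalence} on the subinterval where it applies, together with $\mathbf K(\sqrt q) \geq \pi/2$ and $\sqrt{\chi_n} \approx \pi n/(2\mathbf E(\sqrt q))$, which reduces the effective constant from $4$ to roughly $\pi/2 \cdot 2/\pi \cdot (\pi/2) = \pi/2$ and yields exactly the claimed $\pi\kappa$).

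\textbf{Main obstacle.} The only delicate point is matching the precise numerical constants $\pi\kappa$, $6\pi\kappa$, $2\pi^2$ in \eqref{Ilarge} rather than some larger multiple. The honest route is to split $I(c_n^\kappa, c_n^*) = I(c_n^\kappa, c_n^{\kappa'}) + I(c_n^{\kappa'}, c_n^*)$ for a second parameter, using Lemma \ref{equivalence}'s sharp constant $\frac{\pi\sqrt{\chi_n}}{2\mathbf K(\sqrt q)}$ on the bulk and the crude bound only on a tiny remaining piece of bounded length; then on the bulk $\frac{(\psi_{n,\tau}(1))^2}{\tau} \approx \frac{\pi}{2\sqrt q\, \mathbf K(\sqrt q)} \leq \frac{\pi}{2} \cdot \frac{2}{\pi} \cdot \frac{2}{\pi} \cdot \frac{\pi}{2}$ does not immediately simplify, so one instead writes $\frac{(\psi_{n,\tau}(1))^2}{\tau} = \frac{\pi\sqrt{\chi_n}}{2\tau\mathbf K(\sqrt q)}(1+\delta(\kappa)\epsilon_n) \leq \frac{\pi(n+1)}{2\tau}(1+\delta(\kappa)\epsilon_n) \cdot \frac{1}{\mathbf E(\sqrt q)} \cdot \frac{\sqrt{\chi_n}}{n+1}\cdot\frac{2}{\pi}\cdot\frac{\pi}{2}$ and uses $\frac{\sqrt{\chi_n}}{n+1} \leq \frac{\pi/2}{\mathbf E(\sqrt q)} \cdot \frac{n}{n+1} \cdot \frac{something}{}$ — in short, careful but routine bookkeeping with the elliptic-integral inequalities $1 \leq \mathbf E \leq \pi/2 \leq \mathbf K$ and with $\tau \geq \frac{n+1}{2}$, all of which are already in hand from Section 2. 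I expect the entire argument to fit in under half a page.
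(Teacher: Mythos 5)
Your proposal is correct and is essentially the paper's own proof: a crude uniform bound $(\psi_{n,\tau}(1))^2\leq 4\sqrt{\chi_n(\tau)}$ on the integrand combined with $\tau\geq c_n^\kappa\geq\frac{n+1}{2}$ and the bound $c_n^*-c_n^\kappa\leq\frac{\pi}{2}+\frac{\kappa}{4}(\ln(n)+6)$ on the length of the interval (the paper writes the $\tau$-integral exactly as $\ln\bigl(1+(c_n^*-c_n^\kappa)/c_n^\kappa\bigr)$ and then estimates it just as you do). The only adjustment needed is to replace your bound $\sqrt{\chi_n}\leq(n+1)\sqrt{1+\pi^2/4}$ from \eqref{bounds1-chi} (which puts $2\sqrt{1+\pi^2/4}\approx 3.72>\pi$ in front of $\kappa\ln(n)$) by the sharper $\sqrt{\chi_n}\leq\frac{\pi}{2}(n+1)$, which is already available from the right-hand inequality of \eqref{withE} since $\E\geq 1$; with that choice, $(\psi_{n,\tau}(1))^2\leq 2\pi(n+1)$ and your first, simple computation gives $2\pi(n+1)\cdot\frac{2}{n+1}\cdot\bigl(\frac{\pi}{2}+\frac{\kappa}{4}(\ln(n)+6)\bigr)=\pi\kappa\ln(n)+6\pi\kappa+2\pi^2$ exactly, so the more elaborate splitting via Lemma \ref{equivalence} sketched in your final paragraph is unnecessary.
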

\begin{proof}
Recall that ${\displaystyle |\psi_{n,c}(1)|\leq 2 \chi_n^{1/4}}$ and  ${\displaystyle \sqrt{\chi_n(c)}\leq \frac{\pi}{2}(n+1)}$, see
\cite{Osipov}. Hence, we have   
$$|\psi_{n, \tau}(1)|^2\leq 4\sqrt{\chi_n(\tau)}\leq 2\pi(n+1).$$
Consequently, one gets 
\begin{eqnarray*}
\int_{c^\kappa_n}^{c^*_n} \frac{(\psi_{n,\tau}(1))^2}{\tau} \, d\tau &\leq& 2\pi(n+1) \ln \left (1+\frac {\frac \pi 2+\frac\kappa 4(\ln (n)+6)} {c^\kappa_n}\right).
\end{eqnarray*}
We conclude by using the fact that $c^\kappa_n\geq \frac{n+1}{2}$.
 \end{proof}
We conclude directly the proof of Theorem \ref{Required} in the case where  $n\geq 2\kappa +1$ and $c <c_n^{\kappa}.$ It suffices to combine the results of Proposition 1 and the previous lemma, and get the desired inequalities
\begin{equation}
-\frac{\pi^2}{16}( \kappa \ln (n)+ 6\pi\kappa+\pi)\leq I(c, c^*_n)-(n+\frac 12)\mathcal J\left(\frac{c}{n+\frac 12}\right)\leq \pi \kappa \ln (n)+ 6\pi\kappa+2\pi^2
\end{equation}

We also conclude that  Theorem \ref{Required} and Theorem \ref{Required0} still hold  for the finite number of missing values of $n$, that is, $n\leq 2\kappa +1$. There is no problem to have upper bounds and lower bounds that do not depend on $c$ for $c<1$. From Corollary  \ref{Tilde}, we have a precise estimate in terms of $c^{2n+1}$ for $\widetilde{\lambda_n}(c)$. The same is given for $\lambda_n(c)$ by the following lemma.
\begin{lemma}
 Assume that $n\geq 1$ is fixed and let $0<c<1$. Then, there exist two constants $\delta(n), \delta'(n)$ such that
 \begin{equation}\label{chi}
 \delta(n) \, c^{2n+1}\leq \lambda_n(c)\leq  \delta'(n)\,  c^{2n+1}.
 \end{equation}
\end{lemma}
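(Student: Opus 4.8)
The idea is to extract the factor $c^{2n+1}$ directly from the exact formula \eqref{eqq2lambda_n} by controlling $\psi_{n,\tau}(1)$ near $\tau=0$. Set $h_n(\tau)=(\psi_{n,\tau}(1))^2$. The starting observation is that at $c=0$ the Sturm--Liouville operator in \eqref{eq1.0} is the Legendre operator $-\big((1-x^2)\psi'\big)'$, so $\psi_{n,0}$ is (up to sign) the $L^2([-1,1])$--normalized Legendre polynomial $\sqrt{n+\frac12}\,P_n$ with $P_n(1)=1$; hence $h_n(0)=(\psi_{n,0}(1))^2=n+\frac12$. Moreover it is classical (analytic perturbation theory, the eigenvalues $\chi_n(c)$ being simple) that, with the sign normalization \eqref{eeqq1.4}, $\psi_{n,c}$ depends analytically on $c^2$, so $h_n$ is analytic in $\tau^2$ near $0$ and $h_n(\tau)-\big(n+\tfrac12\big)=O(\tau^2)$. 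In particular $\tau\mapsto\frac{h_n(\tau)-(n+\frac12)}{\tau}$ is bounded on $(0,1]$.

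With this in hand I would write \eqref{eqq2lambda_n} as $\lambda_n(c)=\frac12\exp\!\big(-2\int_c^{c_n^*}\frac{h_n(\tau)}{\tau}\,d\tau\big)$ and use the algebraic identity
\begin{equation*}
\int_c^{c_n^*}\frac{h_n(\tau)}{\tau}\,d\tau=\Big(n+\tfrac12\Big)\ln\frac{c_n^*}{c}+\int_c^{c_n^*}\frac{h_n(\tau)-(n+\frac12)}{\tau}\,d\tau .
\end{equation*}
The remainder integral is bounded in absolute value, uniformly for $c\in(0,1)$, by a finite constant $T_n$ depending only on $n$: on the portion near $\tau=0$ the integrand is bounded by the first paragraph, while on the rest of the (fixed, compact) range it is bounded because $h_n$ is continuous on $[0,\infty)$ and $c_n^*<\infty$ by \eqref{validityC0}. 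Consequently $\lambda_n(c)=\frac12\big(c/c_n^*\big)^{2n+1}e^{-E_n(c)}$ with $|E_n(c)|\le 2T_n$, so the lemma holds with $\delta(n)=\frac12(c_n^*)^{-(2n+1)}e^{-2T_n}$ and $\delta'(n)=\frac12(c_n^*)^{-(2n+1)}e^{2T_n}$.

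The only ingredient that is not pure bookkeeping is the behaviour of $\psi_{n,c}(1)$ as $c\to 0$: one really needs a H\"older--type rate for $h_n(\tau)-(n+\frac12)$, since mere continuity of $\psi_{n,c}(1)$ in $c$ would not prevent $\int_c^1\frac{h_n(\tau)-(n+\frac12)}{\tau}\,d\tau$ from growing (for instance like $\ln\ln(1/c)$) as $c\to0$. The analytic dependence on $c^2$, which is standard for this eigenvalue problem (see e.g.\ \cite{Osipov3}), supplies the rate $O(\tau^2)$, comfortably enough; this is the step I would single out as the one requiring care rather than computation.
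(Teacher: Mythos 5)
Your proof is correct and follows essentially the same route as the paper: both isolate the term $(n+\tfrac12)\ln(1/c)$ from the integral in \eqref{eqq2lambda_n} and bound the remaining contribution by an $n$-dependent constant, the key input being $|\psi_{n,\tau}(1)|^2=(n+\tfrac12)+O(\tau^2)$ near $\tau=0$. The only difference is that the paper gets this rate from the explicit inequality $\bigl||\psi_{n,\tau}(1)|-\sqrt{n+\tfrac12}\bigr|\leq \tau^2/2$ of \cite{Bonami-Karoui2} (and bounds the piece over $[1,c_n^*]$ via $|\psi_{n,\tau}(1)|^2\leq 4\sqrt{\chi_n}$), whereas you invoke analytic dependence on $c^2$; both suffice.
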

\begin{proof}
We first note that  $I(1, c^*_n)\leq I(1, \frac{\pi(n+1)}2)$. We recall that on this interval, we have the inequality $|\psi_{n, \tau}(1)|^2\leq 4\frac{\pi(n+1)}2$. So $I(1, c^*_n)\leq 2\pi (n+1) \ln(\frac{\pi(n+1)}2)$.
Inside the integral defining $I(c, 1),$
we use  the following inequality, that may be found in \cite{Bonami-Karoui2},
\begin{equation}
\left| |\psi_{n, \tau}(1)|-\sqrt{n+\frac 12}\right|\leq \frac{\tau^2}{\sqrt{3(n+1/2)}}\leq \frac{\tau^2}2.
\end{equation}
So $\left|I(c, 1)-(n+\frac 12)\ln\left(\frac 1c\right)\right|\leq 1$, from which we conclude.
\end{proof}
\smallskip
It remains to prove Theorem 2 and Theorem 3 when $c> c_n^{\kappa}$ and $n\geq 2\kappa +1.$

\noindent{\bf Third step: Proofs of Theorems 2 and 3.}
\smallskip

We fix $\kappa>4$.  Because of the previous steps,  we will only need to study the cases 
$$ n\geq 2\kappa +1 \qquad \qquad c<c_n^{\kappa}= \max\left(\frac{\pi n}{2}  -\frac \kappa 4 (\ln (n)+6), \frac{n+1}{2}\right).$$
In view of \eqref{required}, we want to bound the quantity 
$$\mathcal E=I(c, c_n^*)-\left(n+\frac 12\right)\mathcal J\left(\frac{c}{n+\frac 12}\right).$$
We have already given a bound to a first error term
$$\mathcal E_1=I(c, c_n^*)-I(c, c_n^\kappa).$$
Because of \eqref{Ilarge}, we know that
\begin{equation}\label{E1}
0\leq \mathcal E_1\leq \pi \kappa \ln (n)+ 6\pi\kappa+2\pi^2.
\end{equation}
Next, the conditions on $\kappa$ allow us to use the double inequalities \eqref{boundsA}. Namely,
\begin{equation}
 \label{psi1}
 \left(\psi_{n,\tau}(1)\right)^2 = \frac{\pi}{2 \mathbf K(\sqrt{q})} \sqrt{\chi_n(\tau)} +\mathcal{R}(\tau),\quad
 |\mathcal{R}(\tau)|\leq \frac{\delta(\kappa )}{(1-q(\tau)) \mathbf K(\sqrt{q(\tau)})},\quad 0\leq \tau \leq c^{\kappa}_n.
 \end{equation}
This leads to a second error, 
$$\mathcal E_2=I(c,c_n^{\kappa})-\frac{\pi}{2}\int_c^{c_n^{\kappa}}\frac{d\tau}{\sqrt{q(\tau)} \mathbf K(\sqrt{q(\tau)})},$$ which is bounded 
as follows,
$$|\mathcal E_2|\leq\delta(\kappa)\int_c^{c_n^\kappa}\frac 1{(1-q(\tau)) \mathbf K(\sqrt{q(\tau)})}\frac{ d\tau}\tau.$$
We then use the following lemma.
 \begin{lemma}
 We have the inequality
 \begin{equation}
 |\mathcal E_2|\leq 2\delta (\kappa)\left((1+\frac{\pi \kappa}{4} ) \ln(n)+\ln^+\left(\frac{1}{c}\right)+\frac{3\pi \kappa}{2}\right).
 \end{equation}
 \end{lemma}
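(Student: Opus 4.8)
The quantity to bound is
$$|\mathcal E_2|\leq\delta(\kappa)\int_c^{c_n^\kappa}\frac{1}{(1-q(\tau))\,\mathbf K(\sqrt{q(\tau)})}\,\frac{d\tau}{\tau}.$$
The plan is to estimate the integrand in a way that allows an explicit integration. First I would get rid of the factor $1/\mathbf K(\sqrt{q(\tau)})$ using the trivial lower bound $\mathbf K(k)\geq \mathbf K(0)=\pi/2$, so that
$$|\mathcal E_2|\leq \frac{2\delta(\kappa)}{\pi}\int_c^{c_n^\kappa}\frac{1}{1-q(\tau)}\,\frac{d\tau}{\tau}.$$
This reduces everything to controlling $1-q(\tau)$ from below along the range of integration.

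\textbf{Controlling $1-q$.} The key input is inequality \eqref{comparisonK}, namely $1-\frac{2\tau}{\pi n}\leq 2(1-q(\tau))\mathbf K(\sqrt{q(\tau)})$; combined with the crude bound $\mathbf K(\sqrt q)\leq$ something logarithmic this is not quite what I want directly, so instead I would go back to Lemma \ref{comparison} and its proof, which shows that for $\tau\leq c_n^\kappa$ one has $(1-q(\tau))\sqrt{\chi_n(\tau)}>\kappa$, hence (using $\sqrt{\chi_n(\tau)}\leq \frac{\pi(n+1)}{2}$) a lower bound $1-q(\tau)\geq \frac{2\kappa}{\pi(n+1)}$ that is uniform in $\tau$ but too weak near the left endpoint. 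The cleaner route is to split the range of integration at a point where $\tau$ is of order $n$, say at $\tau_0=\min(c_n^\kappa,\max(c,1))$ — actually the natural split is: on $[c,\min(c_n^\kappa,1)]$ the factor $1/(1-q(\tau))$ is bounded (since $q(\tau)\to 0$ as $\tau\to 0$, indeed $q(\tau)\leq \frac{c^2}{n(n+1)}$ is small for $\tau$ bounded), contributing at most a constant times $\ln^+(1/c)$; on the remaining part $[\max(c,1),c_n^\kappa]$ I would use \eqref{comparisonK} together with the elementary bound $\mathbf K(k)\leq \ln\frac{4}{\sqrt{1-k^2}}$ to write $1-q(\tau)\geq \frac{1}{2}\bigl(1-\frac{2\tau}{\pi n}\bigr)/\mathbf K(\sqrt{q(\tau)})$ and absorb the $\mathbf K$ factor, OR — simpler — use directly $1-q(\tau)\geq 1-\frac{2c_n^\kappa}{\pi n}\geq \frac{\kappa}{2\pi n}(\ln n+6)\cdot\frac{1}{?}$...

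\textbf{The main step.} The cleanest argument: on $[\max(c,1),c_n^\kappa]$ the variable $1-q(\tau)$ is increasing as $\tau$ decreases but I only need its value at the upper end together with a change of variables. Since $c_n^\kappa\leq \frac{\pi n}{2}-\frac{\kappa}{4}(\ln n+6)$, we get $1-\frac{2c_n^\kappa}{\pi n}\geq \frac{\kappa(\ln n+6)}{2\pi n}$, and for $\tau\leq c_n^\kappa$, $1-\frac{2\tau}{\pi n}\geq \frac{\kappa(\ln n+6)}{2\pi n}$ as well; feeding this into \eqref{comparisonK} gives a lower bound on $(1-q(\tau))\mathbf K(\sqrt q(\tau))$ of order $\frac{\kappa \ln n}{n}$, hence the integrand $\frac{1}{(1-q(\tau))\mathbf K(\sqrt q(\tau))\tau}\leq \frac{C n}{\kappa \ln n}\cdot\frac{1}{\tau}$, and integrating $\frac{d\tau}{\tau}$ over $[\max(c,1),c_n^\kappa]\subset[\max(c,1),\frac{\pi n}{2}]$ produces a factor $\ln n$, which cancels against the $1/\ln n$; after carrying the constants one lands on a bound of the shape $2\delta(\kappa)\bigl((1+\tfrac{\pi\kappa}{4})\ln n+\ln^+(1/c)+\tfrac{3\pi\kappa}{2}\bigr)$. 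I expect the main obstacle to be the bookkeeping of the constants — choosing the split point and the elementary bounds on $\mathbf K$ so that the final numerical coefficients come out exactly as stated — rather than any conceptual difficulty; all the analytic content is already contained in \eqref{comparisonK}, Lemma \ref{comparison}, and the monotonicity $q(\tau)$ increasing in $\tau$ (equivalently $\sqrt{\chi_n(\tau)}$ versus $\tau$ via \eqref{withE}).
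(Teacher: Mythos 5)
Your starting point and your key input (inequality \eqref{comparisonK}) coincide with the paper's, but your final step loses a factor of $n$. You bound $(1-q(\tau))\,\mathbf K(\sqrt{q(\tau)})$ from below \emph{uniformly} on $[\max(c,1),c_n^\kappa]$ by its worst value, which is of order $\kappa\ln n/n$ and is attained only near the right endpoint $\tau=c_n^\kappa$, and then integrate $d\tau/\tau$ to pick up a factor $\ln n$. The product is
$$\frac{Cn}{\kappa\ln n}\cdot\ln n=\frac{Cn}{\kappa},$$
a bound that is linear in $n$, not logarithmic: the $\ln n$ from the integral cancels the $1/\ln n$ in the prefactor but leaves the $n$ untouched. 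A bound $|\mathcal E_2|=O(n)$ is useless for the purpose at hand, since it would turn the multiplicative error $n^{\delta_2}$ of Theorem 3 into $e^{\delta_2 n}$. The split you briefly contemplated at a point of order $n$ (say $\tau=\pi n/4$) would repair this, but the split you actually adopted, at $\max(c,1)$, does not, because the dangerous region where $1-\frac{2\tau}{\pi n}$ is small sits at the far right end of $[\max(c,1),c_n^\kappa]$, not near $\tau=1$.

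The paper avoids the loss by keeping the $\tau$-dependence of the lower bound inside the integral. From \eqref{comparisonK},
$$|\mathcal E_2|\le 2\delta(\kappa)\int_c^{c_n^\kappa}\frac{1}{1-\frac{2\tau}{\pi n}}\,\frac{d\tau}{\tau}
=2\delta(\kappa)\int_{\frac{2c}{\pi n}}^{\frac{2c_n^\kappa}{\pi n}}\frac{ds}{s(1-s)},$$
and partial fractions give at most $\ln\left(\frac{\pi n}{2c}\right)+\ln\left(\frac{1}{1-\frac{2c_n^\kappa}{\pi n}}\right)$. The second term is controlled by $\frac{\pi\kappa}{4}\ln n+\frac{3\pi\kappa}{2}$ because $1-\frac{2c_n^\kappa}{\pi n}$ is either $\frac{\kappa(\ln n+6)}{2\pi n}$ or at least $1-\frac{n+1}{\pi n}$. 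In other words, the singularity $\frac{1}{1-s}$ at the right endpoint is integrable and contributes only a logarithm; this is exactly the information your uniform bound discards. (A secondary slip: since $c_n^\kappa$ is defined as a maximum, the inequality $c_n^\kappa\le\frac{\pi n}{2}-\frac{\kappa}{4}(\ln n+6)$ you invoke need not hold; the case $c_n^\kappa=\frac{n+1}{2}$ must be handled separately, although there $1-\frac{2c_n^\kappa}{\pi n}$ is bounded below by an absolute constant, so the conclusion is even easier.)
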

 \begin{proof}
By \eqref{comparisonK}, we know  that
 $$2(1-q(\tau)) \mathbf K(\sqrt{q(\tau)})\geq 1- \frac{2 \tau}{\pi n}.$$
 So we have the inequality
 $$ |\mathcal E_2|\leq 2\delta (\kappa) \int_{\frac{2 c}{\pi n}}^{\frac{2 c_n^\kappa}{\pi n}}\frac{ ds}{(1-s)s}
 \leq 2\delta (\kappa)\left(\ln \left(\frac nc\right)+ \ln\left(\frac 1{1-\frac{2 c_n^\kappa}{\pi n}}\right)\right),$$
and we conclude at once.
\end{proof}

It remains to consider the main term, that is,
\begin{equation}
I_{\rm main}(c, c_n^\kappa)=\frac{\pi}{2}\int_c^{c_n^\kappa}\frac{\sqrt{\chi_n(\tau)}}{ \mathbf K(\sqrt{q(\tau)})} \frac{d\tau}{\tau}=\frac{\pi}{2}\int_c^{c_n^\kappa}\frac{d\tau}{ \sqrt {q(\tau)}\mathbf K(\sqrt{q(\tau)})} .
\end{equation}
We use the monotonicity properties of $\sqrt {q(\tau)}\mathbf K(\sqrt{q(\tau)})$ , namely
$$\Phi\left(\frac{2 \tau}{\pi (n+1)}\right)\mathbf K\circ\Phi\left(\frac{2 \tau}{\pi (n+1)}\right)\leq\sqrt {q(\tau)}\mathbf K(\sqrt{q(\tau)})
 \leq \Phi\left(\frac{2 \tau}{\pi n}\right)\mathbf K\circ\Phi\left(\frac{2 \tau}{\pi n}\right).$$
It follows that
$$ \mathcal{J}_{n}(c)- \mathcal{J}_{n}(c_n^\kappa)\leq I_{\rm main}(c, c_n^\kappa)\leq \mathcal{J}_{n+1}(c).$$
So the last error, 
$$\mathcal E_3=I_{\rm main}(c, c_n^\kappa)- \mathcal{J}_{n+\frac 12}(c)=I_{\rm main}(c, c_n^\kappa)- \left(n+\frac 12\right)\mathcal{J}\left(\frac{c}{n+\frac{1}{2}}\right),$$ satisfies the inequalities
$$\mathcal{J}_{n}(c)-\mathcal{J}_{n+\frac 12}(c)- \mathcal{J}_{n}(c_n^\kappa)\leq\mathcal E_3\leq \mathcal{J}_{n+1}(c)-\mathcal{J}_{n+\frac 12}(c).$$
It remains to use \eqref{majJ} and \eqref{comparisonJ}  to conclude. We finally find that
\begin{equation}\label{final}
| \mathcal E | \leq \pi \kappa \ln (n)+ 6\pi\kappa+2\pi^2+2\delta (\kappa)\left((1+\frac{\pi \kappa}{4} ) \ln(n)+\ln^+\left(\frac{1}{c}\right)+\frac{3\pi \kappa}{2}\right)+\frac {\pi^2}{8} \ln \left(\frac{\pi(n+\frac 12)}{2c}\right)+\frac {\pi^3}{16}.
 \end{equation}
 So we can take the following values for $\delta_1, \delta_2, \delta_3,$ that have been given in Theorem 3.
 \begin{eqnarray*}
 \delta_1&=&22+3\pi\kappa(2+\delta(\kappa))\\
 \delta_2&=&\frac {\pi^2}{8}+\pi \kappa+2\delta (\kappa)(1+\frac{\pi \kappa}{4} )\\
 \delta_3&=& \frac {\pi^2}{8}+2\delta (\kappa)(1+\frac{\pi \kappa}{4} ).
 \end{eqnarray*}
This concludes the proofs of Theorem \ref{Required} and Theorem \ref{Required0}.

Note that when $\kappa=12$ we find $\delta_2\approx 200$. We could have improved the sizes of the previous constants at each step, but not significantly. Numerical experiments indicate  that in practice, these constants are much smaller.
\smallskip

\bigskip

 From Theorem \ref{Required} and Corollary \ref{Tilde} we get the following corollary:
 \begin{corollary} There exist three constants $\delta_1\geq 1, \delta_2, \delta_3, \geq 0$ such that, for $n\geq 3$ and $c\leq \frac{\pi n}2$,
 \begin{equation}\label{best}
A(n, c)^{-1}\left(\frac{ec}{2(2n+1)}\right)^{2n+1}\leq  \lambda_n(c)\leq A(n,c)\left(\frac{ec}{2(2n+1)}\right)^{2n+1}.
\end{equation}
 with $$A(n, c)=\delta_1 n^{\delta_2}\left(\frac c{c+1}\right)^{-\delta_3}e^{+\frac {\pi^2}4 \frac{c^2}{n}}.$$
 \end{corollary}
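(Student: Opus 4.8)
The plan is to derive \eqref{best} as a mechanical consequence of the exact representation \eqref{eqq2lambda_n}, the error estimate of Theorem \ref{Required}, and the Widom-type bounds of Corollary \ref{Tilde}; no new idea is needed, only careful bookkeeping of constants.

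\textbf{Step 1: rewrite everything as exponentials.} By \eqref{eqq2lambda_n}, $\lambda_n(c)=\tfrac12\exp\big(-2I(c,c_n^*)\big)$ with $I(c,c_n^*)=\int_c^{c_n^*}\frac{(\psi_{n,\tau}(1))^2}{\tau}\,d\tau$, while by \eqref{decay2}, $\widetilde{\lambda_n}(c)=\tfrac12\exp\big(-2\cdot\frac{\pi^2(n+\frac12)}{4}\int_{\Phi(2c/\pi(n+\frac12))}^1\frac{dt}{t(\mathbf E(t))^2}\big)$. Theorem \ref{Required} asserts exactly that the two exponents differ by $2\mathcal E$ with $|\mathcal E|\le\delta_1+\delta_2\ln n+\delta_3\ln^+(1/c)$, so $\lambda_n(c)=\widetilde{\lambda_n}(c)\,e^{-2\mathcal E}$ and hence $e^{-2|\mathcal E|}\widetilde{\lambda_n}(c)\le\lambda_n(c)\le e^{2|\mathcal E|}\widetilde{\lambda_n}(c)$.

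\textbf{Step 2: insert Corollary \ref{Tilde}.} Since $\frac{ec}{4(n+\frac12)}=\frac{ec}{2(2n+1)}$, the double inequality \eqref{tilde} gives
\[
\tfrac12\Big(\tfrac{ec}{2(2n+1)}\Big)^{2n+1}e^{-\frac{\pi^2}{4}\frac{c^2}{n+\frac12}}\le\widetilde{\lambda_n}(c)\le\tfrac12\Big(\tfrac{ec}{2(2n+1)}\Big)^{2n+1}e^{+\frac{\pi^2}{4}\frac{c^2}{n+\frac12}}.
\]
Combining with Step 1 and using $\frac{c^2}{n+\frac12}\le\frac{c^2}{n}$ in the upper exponent (and $-\frac{c^2}{n+\frac12}\ge-\frac{c^2}{n}$ in the lower one) yields
\[
\tfrac12 e^{-2|\mathcal E|}e^{-\frac{\pi^2}{4}\frac{c^2}{n}}\Big(\tfrac{ec}{2(2n+1)}\Big)^{2n+1}\le\lambda_n(c)\le\tfrac12 e^{2|\mathcal E|}e^{+\frac{\pi^2}{4}\frac{c^2}{n}}\Big(\tfrac{ec}{2(2n+1)}\Big)^{2n+1}.
\]

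\textbf{Step 3: clean up the constants.} Exponentiating the bound on $\mathcal E$ gives $e^{2|\mathcal E|}\le e^{2\delta_1}n^{2\delta_2}e^{2\delta_3\ln^+(1/c)}=e^{2\delta_1}n^{2\delta_2}\max(1,c^{-2\delta_3})$. To put this in the required form it remains to compare $\max(1,c^{-2\delta_3})$ with $\big(\tfrac{c}{c+1}\big)^{-\delta_3'}$: for $c\ge1$ the maximum equals $1$ and $\big(\tfrac{c}{c+1}\big)^{-\delta_3'}\ge1$, while for $0<c<1$ one has $c^{-1}\le\tfrac{c+1}{c}\le2c^{-1}$, so $\big(\tfrac{c}{c+1}\big)^{-\delta_3'}$ is comparable to $c^{-\delta_3'}$ and choosing $\delta_3'\ge2\delta_3$ makes it dominate $\max(1,c^{-2\delta_3})$ up to a fixed constant. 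Since $n\ge3$, that fixed constant, the factor $\tfrac12$, and any power of $3$ incurred when enlarging $\delta_2$ can all be absorbed into the leading constant. With suitably enlarged $\delta_1,\delta_2,\delta_3$, setting $A(n,c)=\delta_1 n^{\delta_2}\big(\tfrac{c}{c+1}\big)^{-\delta_3}e^{+\frac{\pi^2}{4}\frac{c^2}{n}}$ then gives both inequalities of \eqref{best}; the hypotheses $n\ge3$ and $c\le\frac{\pi n}{2}$ are precisely those of Theorem \ref{Required}.

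\textbf{Expected obstacle.} There is essentially none: the corollary is a direct repackaging of Theorem \ref{Required} and Corollary \ref{Tilde}. The only point requiring a touch of care is the translation of the additive error term $\delta_3\ln^+(1/c)$ into the multiplicative factor $\big(\tfrac{c}{c+1}\big)^{-\delta_3}$, i.e. the elementary fact that $\tfrac{c}{c+1}$ behaves like $\min(1,c)$, as already observed in the remark following Theorem \ref{Required0}.
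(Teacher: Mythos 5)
Your proposal is correct and follows exactly the route the paper intends: the paper states this corollary with no written proof beyond ``From Theorem \ref{Required} and Corollary \ref{Tilde} we get the following corollary,'' and your three steps (exponentiating the additive error $\mathcal E$, inserting the Widom-type bounds \eqref{tilde} with $4(n+\tfrac12)=2(2n+1)$ and $\tfrac{c^2}{n+1/2}\leq\tfrac{c^2}{n}$, and trading $\ln^+(1/c)$ for the factor $\bigl(\tfrac{c}{c+1}\bigr)^{-\delta_3}$) are precisely the bookkeeping the authors leave implicit.
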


Widom's Theorem says that $A(n, c)$ can be replaced by a quantity that tends to $1$ for $n$ tending to $\infty$. We cannot give such an asymptotic behavior at this moment, but we can estimate errors for fixed $c$ and $n$, which he does not. Remark that we have used the fact that $\Delta= \ln(4/e)$, see (\ref{Delta}), without proving it or giving a reference. This is a consequence of the asymptotic behavior found by Widom, which cannot  be valid at the same time as \eqref{best} if $e/4$ is replaced by another constant. 
 This implies in particular Theorem \ref{cor-intro}. It may be useful to give also the following  corollaries.
 \begin{corollary} There exist   constants $a>0$ and $\delta\geq 1$ such that, for $c\geq 1$ and $n>1.35\, c$, we have  
 \begin{equation}\label{best-bis}
 \lambda_n(c)\leq \delta e^{-an}.
\end{equation}
\end{corollary}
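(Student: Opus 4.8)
The plan is to derive \eqref{best-bis} as a direct consequence of Corollary~5 (the two-sided bound \eqref{best}), by showing that under the hypothesis $n>1.35\,c$ the dominant factor $\left(\frac{ec}{2(2n+1)}\right)^{2n+1}$ already decays exponentially in $n$, and that the correction factor $A(n,c)$ is too small to spoil this. First I would fix $c\geq 1$ and $n>1.35\,c$, and note that in particular $c\leq \frac{\pi n}{2}$ is satisfied, so Corollary~5 applies and gives
\begin{equation*}
\lambda_n(c)\leq A(n,c)\left(\frac{ec}{2(2n+1)}\right)^{2n+1},\qquad A(n,c)=\delta_1 n^{\delta_2}\left(\frac{c}{c+1}\right)^{-\delta_3}e^{+\frac{\pi^2}{4}\frac{c^2}{n}}.
\end{equation*}

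Next I would estimate the main term. Since $n>1.35\,c$, we have $\frac{ec}{2(2n+1)}<\frac{ec}{4n}<\frac{e}{4\cdot 1.35}=:\rho<1$ (numerically $\rho\approx 0.503$), so
\begin{equation*}
\left(\frac{ec}{2(2n+1)}\right)^{2n+1}\leq \rho^{\,2n+1}=\rho\, e^{(2n)\ln\rho}=\rho\, e^{-2n\ln(1/\rho)},
\end{equation*}
which is exponentially small in $n$ with rate $2\ln(1/\rho)>0$. Then I would bound the nuisance factor $A(n,c)$: the term $\left(\frac{c}{c+1}\right)^{-\delta_3}\leq 2^{\delta_3}$ since $c\geq 1$; the term $e^{\frac{\pi^2}{4}\frac{c^2}{n}}\leq e^{\frac{\pi^2}{4}\cdot\frac{n}{1.35^2}}=e^{b n}$ with $b=\frac{\pi^2}{4\cdot 1.35^2}$ (using $c<n/1.35$, hence $c^2/n<n/1.35^2$); and the polynomial term $\delta_1 n^{\delta_2}$ is absorbed into the exponential since $n^{\delta_2}=e^{\delta_2\ln n}=o(e^{\varepsilon n})$ for any $\varepsilon>0$. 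Collecting these,
\begin{equation*}
\lambda_n(c)\leq \delta_1 2^{\delta_3}\, n^{\delta_2}\, e^{bn}\cdot \rho\, e^{-2n\ln(1/\rho)}\leq \delta'\, e^{-(2\ln(1/\rho)-b)n+\delta_2\ln n}.
\end{equation*}

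The only real point to check is that the exponential gain beats the exponential loss, i.e.\ that $2\ln(1/\rho)-b>0$, where $\rho=\frac{e}{4\cdot 1.35}$ and $b=\frac{\pi^2}{4\cdot 1.35^2}$; numerically $2\ln(1/\rho)\approx 1.374$ while $b\approx 1.354$, so the margin is genuinely positive but thin --- this is presumably why the constant $1.35$ (rather than a rounder number) appears in the statement, and choosing any constant slightly above $e/4\cdot(\text{something})$ would fail. This is the step I expect to be the main obstacle: it is a tight numerical inequality, and one must be careful that the bound $c^2/n<n/1.35^2$ is used rather than the weaker $c^2/n<c$. Once $a_0:=2\ln(1/\rho)-b>0$ is secured, one picks any $a$ with $0<a<a_0$ and absorbs the polynomial factor $\delta_1 2^{\delta_3} n^{\delta_2}e^{-(a_0-a)n}$ into a single constant $\delta\geq 1$ (it is bounded over all $n\geq 1$ since it tends to $0$), yielding $\lambda_n(c)\leq \delta e^{-an}$ uniformly in $c\geq 1$ and $n>1.35\,c$, which is exactly \eqref{best-bis}.
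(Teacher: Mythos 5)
Your proposal is correct and follows essentially the same route as the paper: the paper's one-line proof states that $1.35$ was chosen so that $2\ln\bigl(\tfrac{4n}{ec}\bigr)>\tfrac{\pi^2c^2}{4n^2}$, which is exactly your tight numerical inequality $2\ln(1/\rho)>b$ with $\rho=\tfrac{e}{4\cdot 1.35}$ and $b=\tfrac{\pi^2}{4\cdot 1.35^2}$, applied to the bound of Corollary~5. You have simply written out the absorption of the polynomial and bounded factors that the paper leaves implicit.
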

\begin{proof} The constant $1.35$ has been chosen so that $2\ln (\frac{4 n}{ec})>\frac {\pi^2c^2}{4n^2},$ 
which is the case when  $n>1.35 \, c$.
\end{proof}

One has as well a critical super-exponential decay rate given by the following lemma. 

 \begin{corollary} For any $0\leq a <\frac{4}{e},$ there exists a constant $M_a$ such that for any 
 $c\geq 1,$ we have  
 $$\lambda_n(c)\leq e^{-2n\log\left(\frac{a n}{c}\right)},\quad\forall\,\, n\geq c M_a.$$ 
 Moreover, for any $b>\frac{4}{e},$ there exists a constant $M_b$ such that for any 
  $c\geq 1,$ we have  
  $$\lambda_n(c)>  e^{-2n\log\left(\frac{b n}{c}\right)},\quad\forall\,\, n\geq c M_b.$$ 
\end{corollary}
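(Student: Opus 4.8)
The final corollary is a two-sided super-exponential decay statement. The natural starting point is Corollary 6 (the two-sided bound \eqref{best}), which gives, for $n\geq 3$ and $c\leq \frac{\pi n}2$,
$$A(n,c)^{-1}\left(\frac{ec}{2(2n+1)}\right)^{2n+1}\leq \lambda_n(c)\leq A(n,c)\left(\frac{ec}{2(2n+1)}\right)^{2n+1},\qquad A(n,c)=\delta_1 n^{\delta_2}\left(\frac c{c+1}\right)^{-\delta_3}e^{\frac{\pi^2}4\frac{c^2}n}.$$
Take logarithms: $\ln\lambda_n(c)=-(2n+1)\ln\!\left(\frac{2(2n+1)}{ec}\right)+O(\ln A(n,c))$. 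The target inequalities ask to compare $\ln\lambda_n(c)$ with $-2n\ln\!\left(\frac{an}{c}\right)$ (resp. with $b$ in place of $a$). So the whole argument reduces to showing that, once $n\geq cM_a$ with $M_a$ large enough (depending on how close $a$ is to $4/e$), the gap
$$\Big[(2n+1)\ln\!\tfrac{2(2n+1)}{ec}\Big]-\Big[2n\ln\!\tfrac{an}{c}\Big]$$
dominates $|\ln A(n,c)|$ for the upper bound, and an analogous reverse statement for the lower bound with $b>4/e$.

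**Key steps, in order.** First I would record that $\frac{2(2n+1)}{ec}=\frac{4n}{ec}(1+\tfrac1{2n})$, so $(2n+1)\ln\frac{2(2n+1)}{ec}=2n\ln\frac{4n}{ec}+\ln\frac{4n}{ec}+O(1)$, and write $\ln\frac{4n}{ec}=\ln\frac{an}{c}+\ln\frac{4}{ea}$. Hence the main gap is $2n\ln\frac{4}{ea}+\ln\frac{4n}{ec}+O(1)$. Since $a<4/e$ we have $\ln\frac4{ea}>0$, so this is a positive quantity growing \emph{linearly} in $n$ (with slope $2\ln\frac4{ea}>0$). Second, I would bound $|\ln A(n,c)|\leq \ln\delta_1+\delta_2\ln n+\delta_3\ln^+(1/c)+\frac{\pi^2}4\frac{c^2}n$; using $c\geq1$ the term $\ln^+(1/c)=0$, and using $n\geq cM_a$ (so $c\leq n/M_a$) the last term is $\leq\frac{\pi^2}4\frac{n}{M_a^2}$, while $\delta_2\ln n=o(n)$. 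Therefore, for $M_a$ large and $n\geq cM_a$ (which in particular forces $n$ large since $c\geq1$), the linear-in-$n$ gain $2n\ln\frac4{ea}$ beats $\frac{\pi^2}{4M_a^2}n+\delta_2\ln n+O(1)$, giving $\ln\lambda_n(c)\leq -2n\ln\frac{an}{c}$. I should also check the hypothesis $c\leq\frac{\pi n}2$ of Corollary 6 is implied by $n\geq cM_a$ for $M_a\geq\frac2\pi$. Third, the lower bound: with $b>4/e$, $\ln\frac4{eb}<0$, so now the ``gap'' $2n\ln\frac4{eb}+\ln\frac{4n}{ec}+O(1)-|\ln A(n,c)|$ tends to $-\infty$ linearly in $n$; hence for $M_b$ large and $n\geq cM_b$ we get $\ln\lambda_n(c)\geq -2n\ln\frac{bn}{c}$. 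The only asymmetry is that one must choose $M_a$ (resp. $M_b$) depending on $a$ (resp. $b$) so that $2\ln\frac4{ea}-\frac{\pi^2}{4M_a^2}>0$ with room to spare for the $\delta_2\ln n$ and constant terms — which is why the statement is phrased with ``there exists $M_a$'' rather than a uniform constant.

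**Main obstacle.** There is no deep obstacle here; the content is entirely in Corollary 6, and the present corollary is a clean asymptotic repackaging. The one point requiring a little care is the interplay between the three scales: the linear-in-$n$ gain $2n\ln\frac4{ea}$, the ``parasitic'' linear term $\frac{\pi^2}4\frac{c^2}n\leq\frac{\pi^2}{4M_a^2}n$ coming from the Gaussian factor in $A(n,c)$, and the logarithmic term $\delta_2\ln n$. One must take $M_a$ large enough that $\frac{\pi^2}{4M_a^2}<2\ln\frac4{ea}$ \emph{and then} $n$ large enough (automatic once $M_a$ is further increased, since $n\geq cM_a\geq M_a$) to absorb $\delta_2\ln n+\ln\delta_1$; both choices can be folded into a single final $M_a$. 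The rest is bookkeeping with logarithms.
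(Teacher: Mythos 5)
Your proposal is correct and follows exactly the route the paper intends: the corollary is stated without an explicit proof, but it is presented as a consequence of the two-sided bound \eqref{best}, and your log-comparison — trading $(2n+1)\ln\frac{2(2n+1)}{ec}$ for $2n\ln\frac{an}{c}$ plus a linear gain $2n\ln\frac{4}{ea}$, then absorbing $\ln A(n,c)\leq \ln\delta_1+\delta_2\ln n+\delta_3\ln 2+\frac{\pi^2}{4M_a^2}n$ by taking $M_a$ large — is precisely the mechanism the authors use in the preceding corollary (where $1.35$ is chosen so that $2\ln\frac{4n}{ec}>\frac{\pi^2c^2}{4n^2}$). Your handling of the three scales (linear gain, Gaussian-factor term, logarithmic term) and of the side conditions $n\geq 3$, $c\leq\frac{\pi n}{2}$ is complete, so nothing is missing.
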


The above corollary is a precise answer to Boyd's question on the super-exponential decay rate of the $\lambda_n(c),$ see \cite{Boyd2}.\\

\noindent
{\bf Final discussion and comments:}\\

\noindent
We should mention that one of the problems of our method of approximation of the eigenvalues $\lambda_n(c)$ is the fact that it  cannot be good for $(1-q)\sqrt{\chi_n}$ too close to $0$, while our technique of proof starts from the writing of $\ln(\lambda_n(c)$ as an integral from $c$ to  $c_n^*$. We have seen that asymptotically, for $c$ fixed and $n$ tending to $\infty$, we recover up to a factor of $1/2,$
the asymptotic behavior given by Widom, see Corollary \ref{Tilde}. The asymptotics for $n$ fixed and $c$ tending to $0$ is also well-known, see for example \cite{Xiao}. It may be written as
$$\lambda_n(c)\sim \left(\frac{ec}{4(n+\frac 12)}\right)^{2n+1}W_n$$
with $W_n$ that does not depend on $c$ and  tends to $1$ when $n$ tends to $\infty$. Because of this, we propose also the approximation of $\lambda_n(c)$ given by
\begin{equation}\label{hat}
\widehat{\lambda_n}(c)=2 \widetilde{\lambda_n}(c)=\exp\left(-\frac{\pi^2}{2}(n+\frac 12)\int_{\Phi(\frac{2c}{\pi(n+1/2)}}^{1}\frac{dt}{t\E(t)^2}\right).
\end{equation} 
Note that either one of 
$\widetilde{\lambda_n}(c)$ or $\widehat{\lambda_n}(c)$ can be used to get the precise super-exponential decay rate of the $\lambda_n(c).$
Moreover, both formulae can be tested for the approximation of the $\lambda_n(c).$ Nonetheless, numerical experiments show that the approximation by $\widehat{\lambda_n}(c)$ is surprisingly good for $c, n$ large. For smaller values (and in particular for small values of $(1-q)\sqrt{\chi_n}$), the approximation by $\widetilde{\lambda_n}(c)$ is better.

At this moment, we do not have a theoretical justification of this, apart from the asymptotic behavior of $\lambda_n(c)$. A tentative proof may start by writing $\lambda_n(c)$ with an integral from $0$ to $c$, instead of an integral from $c$ to $c_n^*$. Unfortunately, the singularity at $0$ of the integral makes estimates difficult and the idea of starting at $c_n^*$ instead of $0$ has been central here in order to benefit from the estimates on $\psi_n(1)$.

We do not give a formal proof but rather  some heuristic arguments. Heuristically , for $c'<c$, we have 
$$\ln\left(\frac{\lambda_n(c)}{\lambda_n(c')}\right) \approx \frac{\pi^2}{2}(n+1/2)\int_{\Phi(\frac{2c'}{\pi(n+1/2)})}^{\Phi(\frac{2c}{\pi(n+1/2)})}\frac{dt}{t\E(t)^2}.$$ Also, because of the asymptotic behavior of $\lambda_n(c')$ for $c'$ very close to $0$ and $n$ large enough, we have that
$$\ln\left(\frac{1}{\lambda_n(c')}\right) \approx \frac{\pi^2}{2}(n+1/2)\int_{\Phi(\frac{2c'}{\pi(n+1/2)})}^{1}\frac{dt}{t\E(t)^2}.$$ As a consequence of these two approximations, we have
$$\ln\left(\frac{1}{\lambda_n(c)}\right) \approx \frac{\pi^2}{2}(n+1/2)\int_{\Phi(\frac{2c}{\pi(n+1/2)})}^{1}\frac{dt}{t\E(t)^2}$$
as long as the approximation of the values of $|\psi_n(1)|$ are valid. That is, as long as $(1-q)\sqrt{\chi_n(c)}$ is not too small. 
The approximation $\lambda_n(c)$ by $\widehat{\lambda_n}(c)$ has been tested for  different values of $n$ and $c$ in the 
examples 3 and 4, below. From these simulations, we can think that the quantity ${\displaystyle \frac{\lambda_n(c)}{\widetilde{\lambda_n}(c)}}$ increases from 1 to 2 when $n$ goes from the beginning of the plunge region to infinity.

\section{Numerical results}

In this section, we illustrate the results of the previous  sections
by various numerical examples.

\vskip 0.5cm
\noindent
{\bf Example 1:} In this first example, we illustrate the fact that the actual values of the  constants $\kappa$ and $\delta(\kappa),$
given  by (\ref{versusChi}) and (\ref{boundsA}), respectively, are   much smaller than the theoretical values given in the proof of Lemma \ref{equivalence}. We are interested in these values for $n\geq 2c/\pi$.
For this purpose, we have considered the values of $c= m \pi, m=10, 20, 30,  40.$ We have used Flammer's method and computed
highly accurate values of $\chi_n(c)$ and $\psi_{n,c}(1).$ Then, we have  computed the smallest value of $\kappa,$ denoted by $\kappa_c$ and
ensuring the bounds  (\ref{boundsA}). Also, we have computed the corresponding values  $\delta(\kappa_c)$ so that
$(\psi_{n,c}(1))^2$ is equal to its upper bound given in (\ref{boundsA}).  It turns out that $\kappa_c,$ the critical value of $\kappa,$  is obtained for $n-$th eigenvalues $\chi_n(c)$ with
$n=n_c = [2c/\pi].$ Also, by considering various consecutive values of $n_c\leq n\leq  n_c+40$ and by computing the corresponding values of $\kappa$ and $\delta(\kappa),$ we found that the $\max \delta(\kappa)$ is of the same size as $\kappa_c.$
 Table 1 shows the  values of the critical values $\kappa_c$ and $\delta(\kappa_c)$ for the  different  values of the
bandwidth $c.$ Also, we give the values of $\max \delta(\kappa).$

\begin{center}
\begin{table}[h]
\vskip 0.2cm\hspace*{4cm}
\begin{tabular}{ccccc} \hline
 $c$ &$n_c$&$ \kappa_c$&$\delta(\kappa_c)$&$\max \delta(\kappa).$ \\   \hline
$10\, \pi$  &20    &  0.447   & 0.058  & 0.091    \\
$20\, \pi$ &40    & 0.413    & 0.051  & 0.084\\
$30\, \pi$ &60    & 0.394    & 0.047   & 0.080    \\
$ 40\, \pi$ &80   & 0.335     &0.025   & 0.048   \\
\hline
\end{tabular}
\caption{Critical values  of $\kappa,$  $\delta(\kappa)$ and $\max \delta(\kappa)$ for different values of $c.$}
\end{table}
\end{center}

\vskip 0.5cm
\noindent
{\bf Example 2:} In this example, we illustrate  our approximations of the quantity $\sqrt{q}$
 by $\sqrt{\widetilde q},$ given by  formula \eqref{approxq}. The accuracy of this  approximation is  critical 
 for proving the exact super-exponential decay rate of the $\lambda_n(c)$ by  our formula \eqref{decay2}. 
For this purpose we have considered different values of the bandwidth
$c$ and computed the previous approximations for different values of $n.$ These approximations are then compared with highly accurate 
counterparts obtained by the use of Flammer's method. The obtained numerical results are given by Table 2. 

\begin{center}
\begin{table}[h]
\vskip 0.2cm\hspace*{4cm}
\begin{tabular}{cccc} \hline
 $c$ &$n$&$\sqrt{\widetilde q}$&$\sqrt{q}$ \\   \hline
 & & & \\
$10$  &6       &0.995012670  & 0.99486271\\
      &10        &0.782942846  & 0.78302833 \\
      &15       &0.585651991  & 0.58583492   \\
 & & & \\
$25$ &16    &0.99062205     & 0.98924622\\
     &20    &0.90491661     & 0.90471915   \\
     &25    &0.79783057     & 0.79783979 \\
 & & & \\
$50$ &33 & 0.99501269    & 0.99430098\\
     &40  & 0.91050626    & 0.91045325 \\
     &50  & 0.80287160    & 0.80287326 \\
 & & & \\
$100$ &64 & 0.99705417    & 0.99669712 \\
      &80& 0.91330250    & 0.91328853   \\
      &100& 0.80540660    & 0.80540692   \\
\hline
\end{tabular}
\caption{Illustrations of the approximation formula \eqref{approxq}.}
\end{table}
\end{center}

\vskip 0.5cm
\noindent
{\bf Example 3:} In this example, we compare the  explicit formula 
  given by \eqref{hat} to compute highly accurate values of $\lambda_n(c)$. For this purpose, we have considered the values of $c=10 \pi, 20\pi, 30\pi$ and
computed   $\lambda_n(c)$ by using the method given in \cite{Karoui1}. Then, we have implemented
formula (\ref{hat}) in a Maple computing software code. Figure 1 (a), (b), (c) show the graph of  $\ln(\lambda_n(c))$ versus the graph of  $\ln(\widehat \lambda_n(c)),$ and $\ln(\lambda_n^W(c)),$ for the different values
of $c$ and $n.$ Here, $\lambda_n^W(c)$ is the Widom's asymptotic approximation of $\lambda_n(c),$ given by 
\eqref{Widom_asymptotic}. Also, we have plotted in Figure 2, the graphs of the corresponding values of
${\displaystyle \ln\left(\frac{\widehat{\lambda_n}(c)}{ \lambda_n(c)}\right)}.$ These figures illustrate the surprising precision of the explicit formula \eqref{hat} for computing approximate values of the $\lambda_n(c)$
which is numerically valid whenever $q<1.$ In particular, the numerical results illustrated by Figure 2, indicate that at least for moderate values of $c,$ and $q<1,$ the approximations of $\lambda_n(c),$ by either $\widetilde{\lambda_n}(c)$ or  $\widehat{\lambda_n}(c)$ are equal to 
$\lambda_n(c)$ up to a small multiplicative constant.

\begin{figure}[h]
{\includegraphics[width=15cm,height=6.2cm]{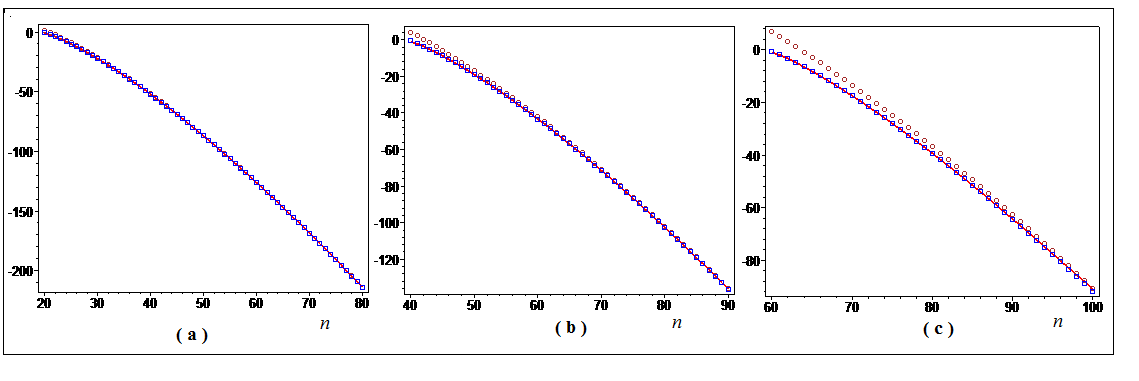}} \vskip
-0.5cm\hspace*{2cm}\caption{Graphs of
$\ln (\widehat{ \lambda_n}(c))$ (boxes), $\ln ( \lambda_n^W(c))$ (circles) and $\ln (\lambda_n(c))$ (red line) with $c=10 \pi$ for (a), $c=20\pi$ for  (b) and $c=30\pi$ for (c). }
\end{figure}

\begin{figure}[h]
{\includegraphics[width=15cm,height=5cm]{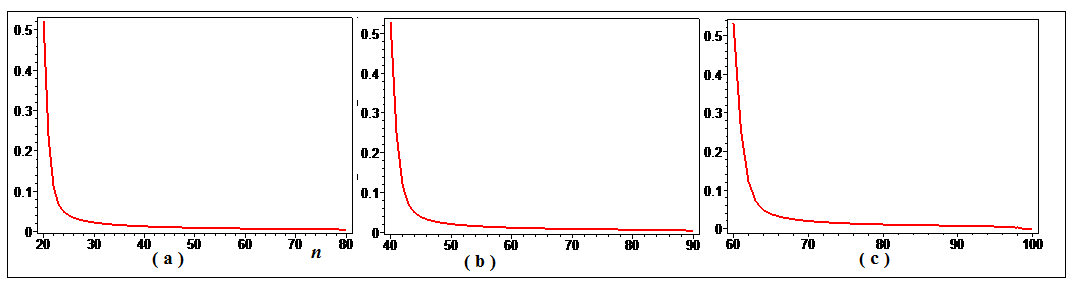}} \vskip
-0.5cm\hspace*{2cm}\caption{ Graphs of
$\ln\left(\frac{\widehat\lambda_n(c)}{{ \lambda_n(c)}}\right)$ with $c=10\pi$ for (a), $c=20\pi$ for (b) and $c=30\pi$ for (c).}
\end{figure}

\vskip 0.5cm
\noindent
{\bf Example 4:} In this example, we illustrate the accuracy of the approximation scheme \eqref{hat} in the cases where the bandwidth $c$
has relatively large or very large values. For this purpose, we have borrowed some  data given in Table 3 of \cite{Osipov-Rokhlin}, 
concerning the computation of $|\mu_n(c)|=\sqrt{\frac{2\pi}{c} \lambda_n(c)}.$ Note that in \cite{Osipov-Rokhlin} and in the present work, the roles of 
$\lambda_n$ and $\mu_n$ have been reversed.   The data provided by Osipov and Rokhlin
are  obtained by  the highly accurate  numerical  method for the computation of the $\lambda_n(c)$ developed by the authors    and described in  \cite{Osipov-Rokhlin}. These data are considered as references values and are  used for comparison purpose. Table 3 gives the values of 
$|\widehat \mu_n(c)|=\sqrt{\frac{2\pi}{c} \widehat\lambda_n(c)},$ versus the corresponding references values.
The numerical results of Table 3 indicate that the accuracy of formula \eqref{hat} is not affected
by the large values of $c.$ 
Also, to check the validity condition of our explicit approximation formula of the $\lambda_n(c),$ for each couple $(c,n),$ we have provided
the corresponding approximations of $q$ and $(1-q)\sqrt{\chi_n},$ given by $\widetilde q$ and $(1-\widetilde q)\sqrt{\widetilde \chi_n}.$
Note that for moderate and large values of the quantity $(1-q)\sqrt{\chi_n},$ a satisfactory approximation of this latter is given by 
the approximation $(1-\widetilde q)\sqrt{\widetilde \chi_n}.$ In fact, from \eqref{error-q} and \eqref{error-chi}, we have 
\begin{eqnarray*}
\left| (1-q)\sqrt{\chi_n}-(1-\widetilde q)\sqrt{\widetilde \chi_n}\right|&\leq & (1-q) \left|\sqrt{\widetilde \chi_n}-\sqrt{\chi_n}\right|+|q-\widetilde q|\sqrt{\widetilde \chi_n}\\
&\leq & \frac{1}{2} (1-\widetilde q)+ \left|\sqrt{q}-\sqrt{\widetilde q}\right| \frac{c}{2\sqrt{\chi_n}}\leq \frac{3}{2} -\frac{\widetilde q}{2}.
\end{eqnarray*}

\begin{center}
\begin{table}[h]
\vskip 0.2cm\hspace*{2cm}
\begin{tabular}{cccccc} \hline
 $c$ &$n$&$\widetilde q$&$(1-\widetilde q)\sqrt{\widetilde \chi_n}$&$|\widehat \mu_n|$&$|\mu_n|$ \\   \hline
$250$  &179     &0.924218 & 19.707014  & 0.18948E-07 & $0.18854E-07$  \\
       &184     &0.903501 & 25.380432  & 0.16196E-09 & $0.16130E-09$   \\
       &188     &0.886848 & 30.038563  & 0.30609E-11 & $0.30500E-11$  \\
$1000$  &659    &0.981782 & 18.386116  &0.38402E-07 & $0.38241E-07$  \\
       &665     &0.976303 & 23.983045  & 0.44139E-09 & $0.43991E-09$   \\
       &671     &0.970675 & 29.764638  & 0.42935E-11 & $0.42815E-11$     \\
$16000$  &10213 &0.998985 & 16.244476  & 0.56758E-07 & $0.56568E-07$  \\
       &10222   &0.998614 & 22.190912  &0.52955E-09 & $0.52821E-09$                 \\
       &10231   &0.998232 & 28.312611  &0.42989E-11 & $0.42902E-11$                   \\
$128000$  &81518&0.999881 & 15.293549  &0.42532E-07 & $0.42408E-07$  \\
       &81529   &0.999834 & 21.234778  &0.39992E-09 & $0.39906E-09$                 \\
       &81539   &0.999791 & 26.766672  &0.51858E-11 & $0.51768E-11$                   \\
$10^6$  &636652 &0.999986 & 13.738235  &0.51646E-07 & $0.51504E-07$  \\
       &636665  &0.999980 & 19.666621  &0.49076E-09 & $0.48980E-09$                 \\
       &636677  &0.999975 & 25.260364  &0.60652E-11 & $0.60558E-11$                   \\
\hline
\end{tabular}
\caption{Illustrations of the approximation formula \eqref{hat} for large values of $c, n.$}
\end{table}
\end{center}

\noindent
{\bf Acknowledgement:} The authors thank very much the anonymous referee for the valuable comments and suggestions
that helped them to improve the revised version of this work. Special thanks of the second author  go to Laboratory MAPMO 
of the University of Orl\'eans
where part of this work has been done while he  was a visitor there.

\end{document}